\documentclass{amsart}
%%%%%%%%%%%%%%%%%%%%%%%%%%%%%%%%%%%%%%%%%%%%%%%%%%%%%%%%%%%%%%%%%
\oddsidemargin 0mm
\evensidemargin 0mm
\topmargin 0mm
\textwidth 160mm
\textheight 230mm
\tolerance=9999
%%%%%%%%%%%%%%%%%%%%%%%%%%%%%%%%%%%%%%%%%%%%%%%%%%%%%%%%%%%%%%%%%
\usepackage{amssymb,amstext,amsmath,amscd,amsthm,amsfonts,enumerate,graphicx,latexsym,stmaryrd,multicol}

\usepackage[usenames]{color}
\usepackage[all]{xy}
%%%%%%%%%%%%%%%%%%%%%%%%%%%%%%%%%%%%%%%%%%%%%%%%%%%%%%%%%%%%%%%%%
%\newcommand{\com}[1]{{\color{green} #1}}
%\newcommand{\old}[1]{{\color{red} #1}}
%\newcommand{\new}[1]{{\color{blue} #1}}
%%%%%%%%%%%%%%%%%%%%%%%%%%%%%%%%%%%%%%%%%%%%%%%%%%%%%%%%%%%%%%%%%
\newtheorem{thm}{Theorem}[section]
\newtheorem{lem}[thm]{Lemma}
\newtheorem{prop}[thm]{Proposition}
\newtheorem{cor}[thm]{Corollary}
%%%%%%%%%%%%%%%%%%%%%%%%%%%%%%%%%%%%%%%%%%%%%%%%%%%%%%%%%%%%%%%%%
\theoremstyle{definition}

\newtheorem{ques}[thm]{Question}
\newtheorem{rem}[thm]{Remark}

\newtheorem{ex}[thm]{Example}

\newtheorem{conv}[thm]{Convention}
%%%%%%%%%%%%%%%%%%%%%%%%%%%%%%%%%%%%%%%%%%%%%%%%%%%%%%%%%%%
\theoremstyle{remark}

\newtheorem{claim}{Claim}
\newtheorem{claim2}{Claim}
\newtheorem*{ac}{Acknowledgments}
%\newtheorem*{conv}{Convention}
%%%%%%%%%%%%%%%%%%%%%%%%%%%%%%%%%%%%%%%%%%%%%%%%%%%%%%%%%%%
\def\Ann{\mathsf{Ann}}
\def\cok{\operatorname{Cok}}
\def\Ext{\operatorname{Ext}}
\def\dual{D}
\def\dim{\operatorname{dim}}
\def\grade{\operatorname{grade}}
\def\H{\operatorname{H}}
\def\height{\operatorname{ht}}
\def\Hom{\operatorname{Hom}}
\def\image{\operatorname{Im}}
\def\ker{\operatorname{Ker}}
\def\L{\mathbf{L}}
\def\lim{\operatorname{lim}}
\def\m{\mathfrak{m}}

\def\p{\mathfrak{p}}
\def\pd{\operatorname{pd}}
\def\q{\mathfrak{q}}
\def\rank{\operatorname{rank}}
\def\RHom{\operatorname{\mathbf{R}Hom}}

\def\spec{\operatorname{Spec}}
\def\supp{\operatorname{Supp}}
\def\tr{\operatorname{tr}}
%%%%%%%%%%%%%%%%%%%%%%%%%%%%%%%%%%%%%%%%%%%%%%%%%%%%%%%%%%%
\begin{document}
\allowdisplaybreaks
\title{Trace ideals, conductors, and ideals of finite (phantom) projective dimension}
\author{Kaito Kimura}
\address{Graduate School of Mathematics, Nagoya University, Furocho, Chikusaku, Nagoya 464-8602, Japan}
\email{m21018b@math.nagoya-u.ac.jp}
\thanks{2020 {\em Mathematics Subject Classification.} 13A35, 13B22, 13D02, 13D05}
\thanks{{\em Key words and phrases.} trace ideal, parameter test ideal, conductor, $F$-ideal, finite projective dimension, parameter ideal, dualizing complex}
%\thanks{The author was supported by JSPS Overseas Challenge Program for Young Researchers, and was partly supported by Grant-in-Aid for JSPS Fellows Grant Number 23KJ1117.}
\begin{abstract}
In this paper, we consider whether parameter test ideals, conductors, $F$-ideals, and trace ideals are contained in an ideal whose quotient ring has finite phantom projective dimension (for example, ideals generated by a system of parameters or ideals with finite projective dimension).
One of the main results asserts that such inclusions do not exist in quasi-Gorenstein complete local domains.
We also provide examples of Cohen--Macaulay local rings with good properties where such inclusions occur, thus answering negatively a question of Huneke-Swanson. 
\end{abstract}
\maketitle
%\tableofcontents
%%%%%%%%%%%%%%%%%%%%%%%%%%%%%%%%%%%%%%%%%%%%%%%%%%%%%%%%%%%%

\section{Introduction}

Throughout the present paper, all rings are assumed to be commutative and Noetherian. 
We refer to Conventions \ref{conv} and \ref{def} for the notation and terminology used in this paper.

The first motivation of this paper is the following question for analytically unramified local rings:
\begin{ques}\label{ques 1}
Is the conductor not contained in any ideal generated by a system of parameters?
\end{ques}
\noindent Ikeda \cite{I} proved that the monomial conjecture holds if and only if Question \ref{ques 1} is affirmative for every Gorenstein complete local domain, and raised the issue of which domains satisfy this question.
By now, the monomial conjecture has been resolved positively by Andr\'{e} \cite{And}, so Question \ref{ques 1} holds for every Gorenstein complete local domain.
Asgharzadeh \cite{A} showed that Question \ref{ques 1} holds for every analytically unramified quasi-Gorenstein local ring.
Huneke and Swanson posed the open problem of whether Question \ref{ques 1} holds for all analytically unramified Cohen--Macaulay local domains; see \cite[Exercise 12.5]{HS}.

Another motivation is the following question about local rings of prime characteristic:
\begin{ques}\label{ques 2}
Is the parameter test ideal not contained in any ideal generated by a system of parameters?
\end{ques}
\noindent For a reduced local ring of prime characteristic, the conductor contains the parameter test ideal, and hence the validity of Question \ref{ques 2} implies that of Question \ref{ques 1}.
Smith \cite{S2}\footnote{In \cite{S2}, it is stated that Question \ref{ques 2} holds for any Cohen--Macaulay local ring, but the proof contains a gap and a counterexample exists as given in this paper. For example, if we assume the ring is complete, Gorenstein, and reduced, the proof works; see Remark \ref{others remark}(4).} showed that Question \ref{ques 2} (and thus Question \ref{ques 1}) holds for any Gorenstein reduced complete local ring of prime characteristic.
Dey and Dutta \cite{DD} proved that the parameter test ideal of a Gorenstein complete local domain is not contained in any ideal of finite projective dimension.
In this paper, we aim to generalize these results in two different directions.
One of our main results is a unified generalization of several results, including those previously established by Asgharzadeh \cite{A}, Dey-Dutta \cite{DD}, Ikeda \cite{I}, and Smith \cite{S2}.
We also construct a counterexample to the problem posed by Huneke and Swanson.

In \cite{S1}, Smith introduced the concept of $F$-ideal, as a key ingredient of the result in \cite{S1}. Smith showed in \cite{S2} that a non-zero $F$-ideal is not contained in a parameter ideal.
Given that a parameter ideal of a Cohen--Macaulay ring has finite projective dimension, the following theorem extends Smith's result.

\begin{thm}[Theorem \ref{F-ideal and fpd}]\label{main 1}
Let $R$ be a local ring of prime characteristic $p$, $I$ a proper ideal of $R$ of finite projective dimension, and $J$ a non-zero $F$-ideal.
If $R$ is either a Cohen--Macaulay ring or an excellent equidimensional reduced ring, then $J$ is not contained in $I$.
\end{thm}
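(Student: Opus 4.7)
The plan is to argue by contradiction, assuming $J\subseteq I$ with $J$ a non-zero $F$-ideal and $I$ a proper ideal of finite projective dimension. First I would reduce to the case where $R$ is complete: the completion is faithfully flat, so $\pd_{\widehat{R}}(I\widehat{R})=\pd_R(I)<\infty$, and the natural isomorphism $\H^d_\m(R)\otimes_R\widehat{R}\cong\H^d_{\widehat{\m}}(\widehat{R})$ transports any $F$-compatible submodule witnessing $J$ to one witnessing $J\widehat{R}$. In the complete case, the $F$-ideal hypothesis supplies a non-zero $F$-compatible submodule $W\subseteq\H^d_\m(R)$, with $d=\dim R$, such that $\Ann_R(W)=J$; non-vanishing of $W$ is forced by $J\subsetneq R$.

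The central task is then to show that the containment $\Ann_R(W)\subseteq I$ is inconsistent with $W\neq 0$ whenever $I$ has finite projective dimension. In Smith's parameter-ideal setting this follows from the explicit identity $\Ann_R\eta_{\mathbf{x}}=(\mathbf{x})$ for the canonical element $\eta_{\mathbf{x}}\in\H^d_\m(R)$ together with the description $\H^d_\m(R)=\varinjlim_e R/\mathbf{x}^{[p^e]}$, which allows the $F$-stability of $W$ to be iterated into a Frobenius avalanche. For a general finite-pd ideal $I$, I would seek the analog: the finite projective dimension hypothesis feeds in through Peskine--Szpiro, giving that each Frobenius power $I^{[p^e]}$ likewise has finite projective dimension and hence a uniformly bounded free resolution, which should yield enough Tor/acyclicity to drive an iterated annihilator-chasing argument. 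Combined with the $F$-compatibility of $W$, this should squeeze $W$ into $\bigcap_e I^{[p^e]}\,\H^d_\m(R)$, which must vanish by a Krull-intersection / new intersection argument and gives the desired contradiction.

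The main obstacle will be the bridge itself: for an arbitrary finite-pd ideal $I$ one lacks a direct canonical element analogous to $\eta_{\mathbf{x}}$ attached to $I$, so the iteration must be set up through tight closure or local duality, presumably relying on a lemma established earlier in the paper that controls how $\H^d_\m(R)$ interacts with Frobenius powers of ideals of finite projective dimension. For the excellent equidimensional reduced case, which need not be Cohen--Macaulay, I would run a parallel argument exploiting the existence of test elements available in equidimensional reduced excellent rings of prime characteristic, verifying that both the $F$-ideal property of $J$ and the finite-pd hypothesis on $I$ interact suitably with the colon-capturing and tight-closure machinery in that setting.
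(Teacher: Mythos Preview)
Your outline captures the general contours---contradiction, Frobenius iteration via Peskine--Szpiro, and test elements in the non-CM case---but the central mechanism is missing, and the target you aim for is in the wrong direction.

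The bridge you are looking for is not an internal lemma but the Hochster--Yao embedding theorem: for $R/I$ of finite projective dimension there is an $R$-regular sequence $x_1,\ldots,x_s$ and an exact sequence
\[
0 \to R/I \xrightarrow{\varphi} \bigoplus_{i=0}^{s} \bigl(R/(x_1,\ldots,x_i)\bigr)^{n_i} \to N \to 0
\]
with $N$ again of finite projective dimension. This is what converts the abstract finite-pd ideal $I$ into data involving partial parameter ideals. Writing $\varphi$ as a matrix $(r_{ij})$, the inclusion $J\subseteq I$ gives $Jr_{ij}\subseteq (x_1,\ldots,x_i)$. After extending to a full system of parameters $x_1,\ldots,x_d$, the paper's Lemma~\ref{nonCM F-ideal equiv.} (characterizing $F$-ideals via limit closures) iterates this to $Jr_{ij}^{p^e}\subseteq (x_1^{p^e},\ldots,x_i^{p^e},x_{i+1}^{mp^e},\ldots,x_d^{mp^e})^{\lim}$ for all $m,e$. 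Peskine--Szpiro then guarantees that $F^e_\ast(R)\otimes\varphi$ remains injective, so once one shows $Jr_{ij}^{p^e}\subseteq (x_1^{p^e},\ldots,x_i^{p^e})$ (immediate in the CM case, and up to a fixed test element $c$ in the excellent equidimensional reduced case), one concludes $J\subseteq I^{[p^e]}$ for all $e$, whence $J=0$.

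Your proposed target, squeezing the $F$-stable submodule $W=(0:_{\H^d_\m(R)}J)$ into $\bigcap_e I^{[p^e]}\H^d_\m(R)$, goes the wrong way: being annihilated by $J$ makes $W$ large, not small, and there is no reason for $W\subseteq I\H^d_\m(R)$ to hold at all. The correct conclusion is on the ideal side, $J\subseteq\bigcap_e I^{[p^e]}$, and getting there requires the Hochster--Yao embedding to translate annihilation by $J$ of the abstract $R/I$ into annihilation statements over parameter quotients, where the $F$-ideal property has traction. Two smaller points: the definition of $F$-ideal only gives $J\subseteq\Ann_R(W)$, not equality; and the reduction to the complete case is unnecessary (the paper works directly over $R$).
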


\noindent In the proof of this result, the theorem of Hochster and Yao \cite{HY} plays an important role.
Theorem \ref{main 1} (and Remark \ref{para ver of main 1}) asserts that the same statement holds under mild assumptions, even for rings that are not necessarily Cohen--Macaulay.
In this case, the gap arising from the system of parameters not forming a regular sequence is bridged by the existence of a test element.
As a corollary of Theorem \ref{main 1} (and Remark \ref{para ver of main 1}), it follows that the parameter test ideal and the conductor of a quasi-Gorenstein equidimensional reduced complete local ring are neither contained in an ideal of finite projective dimension nor a parameter ideal, thereby refining both Smith's result, and the result of Dey and Dutta.

Next, we consider the problem posed by Huneke and Swanson.
Question \ref{ques 1} has an affirmative answer for every one-dimensional analytically unramified local ring; see Remark \ref{others remark}(5).
The problem lies in cases of dimension two or higher.
We provide an example (see Example \ref{count. para. s.o.p}) of a two-dimensional analytically unramified Cohen--Macaulay local domain such that Question \ref{ques 1} (and hence Question \ref{ques 2}) does not hold.
%This example can be extended to the case of Cohen--Macaulay complete local domains; see Remark \ref{others remark}(2).
%Therefore, the question presented by Huneke and Swanson has been resolved negatively.

Conductors and parameter test ideals often contain a trace ideal of some big Cohen--Macaulay module.
Dey and Dutta \cite{DD} approached Question \ref{ques 2} from this perspective.
They proved the following: when $R$ is a Cohen--Macaulay complete local ring, the trace submodule $\tr_\omega (M)$ of $\omega$ is not contained in $I\omega$ for a proper ideal $I$ of $R$ of finite projective dimension, where $M$ is a big Cohen--Macaulay $R$-module and $\omega$ is the canonical module of $R$.
In particular, the trace ideal $\tr_R (M)$ is not contained in $I$ if $R$ is Gorenstein.
Our main result below extends their result in several directions.

\begin{thm}[Corollary \ref{nonCM original}]\label{main 2}
Let $R$ be a complete local ring, $I$ a proper ideal of $R$, $M$ a big Cohen--Macaulay $R$-module, and $\omega$ a canonical module of $R$.
Suppose that there is a bounded complex $F$ of finitely generated free $R$-modules satisfying the standard conditions on rank and height such that $H_0(F)=R/I$ (for example, when $I$ has finite projective dimension or $I$ is a parameter ideal).
Then $\tr_\omega (M)\nsubseteq I\omega$.
Additionally, assume that $R$ is quasi-Gorenstein.
Then the trace ideal $\tr_R (M)$ is not contained in $I$.
In particular, the conductor of $R$ is not contained in $I$ if $R$ is a complete local domain.
\end{thm}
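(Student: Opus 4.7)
My plan is to prove the three assertions in sequence, reducing the latter two to the first.

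\textbf{Core assertion $\tr_\omega(M)\not\subseteq I\omega$.} I argue by contradiction, assuming $\tr_\omega(M)\subseteq I\omega$; equivalently, every $R$-linear map $\varphi\colon M\to\omega$ has $\varphi(M)\subseteq I\omega$. The decisive input is the phantom acyclicity theorem of Hochster--Huneke: since $F$ is a bounded complex of finitely generated free $R$-modules satisfying the standard conditions on rank and height and $M$ is a (balanced) big Cohen--Macaulay $R$-module, $F\otimes_R M$ is acyclic with $H_0(F\otimes_R M)=M/IM$. Thus $M/IM$ is the $0$-th homology of a finite complex whose terms are direct sums of copies of $M$.

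Next I apply $\Hom_R(-,\omega)$ and invoke tensor--Hom adjunction to rewrite the resulting cochain complex as $\Hom_R(F_\bullet,\Hom_R(M,\omega))$. The hypothesis forces the $F_0\otimes_R M$-level evaluation into $\omega$ to land inside $I\omega$, so the induced map $\Hom_R(M,\omega)\to \Hom_R(M,\omega/I\omega)$ vanishes. Combining this with local duality for the complete local ring $R$ --- namely $\H^d_\m(\omega)\cong E_R(k)$ with $d=\dim R$, together with the canonical element / direct summand theorem now secured by Andr\'{e} --- one exhibits a non-vanishing class in $\H^d_\m(\omega)$ whose Koszul-cocycle representative pairs non-trivially with a carefully chosen element of $\Hom_R(M,\omega)$ modulo $I\omega$, producing the desired contradiction.

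\textbf{Quasi-Gorenstein and conductor cases.} When $R$ is quasi-Gorenstein, $\omega\cong R$, and transporting through this isomorphism sends $\tr_\omega(M)$ to $\tr_R(M)$ and $I\omega$ to $I$, so the core assertion gives $\tr_R(M)\not\subseteq I$. For the conductor statement with $R$ a complete local domain, I use the identification of the conductor $\mathfrak{c}=(R:_R\overline{R})$ with the trace ideal $\tr_R(\overline{R})$ via the standard embedding $\Hom_R(\overline{R},R)\hookrightarrow \mathrm{Frac}(R)$. Choosing the big Cohen--Macaulay module $M$ so that $\tr_R(\overline{R})\subseteq \tr_R(M)$ --- for instance taking $M=\overline{R}$ when it is big CM, or a big CM extension built from $\overline{R}$ via Andr\'{e}'s big Cohen--Macaulay algebra $R^+$ --- then yields $\mathfrak{c}\not\subseteq I$.

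\textbf{Main obstacle.} The hard part is making the contradiction in the core assertion precise. Since $M$ is not finitely generated, standard Matlis duality does not apply directly to $\Hom_R(M,\omega)$, and $M/IM$ need not admit a finite projective resolution computing $\Ext^\ast_R(M/IM,\omega)$. The phantom acyclicity of $F\otimes_R M$ must substitute for such a resolution, and one must carefully track how the canonical element in $\H^d_\m(\omega)$ survives modulo $I\omega$ under the evaluation pairing; locating this witness class is the technical heart of the argument.
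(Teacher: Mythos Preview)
Your proposal has two real gaps.

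\textbf{Core step.} The contradiction is never produced. Even granting that $F\otimes_R M$ is acyclic (which already needs $M$ \emph{balanced} big Cohen--Macaulay, stronger than the paper's definition), applying $\Hom_R(-,\omega)$ to this acyclic complex does not yield an exact sequence: once $R$ is not Cohen--Macaulay, $\omega$ has neither finite injective dimension nor the maximal Cohen--Macaulay property, so the usual Ext-vanishing arguments fail. Your invocation of local duality and the canonical element theorem is a placeholder---you name no pairing, no witness class, and you yourself flag this as the unfinished ``technical heart.'' The paper bypasses the difficulty by replacing $\omega$ with the full dualizing complex $D$: from the standard conditions it proves $H_i(F\otimes_R^{\mathbf L}D)=0$ for $i>0$, and from the big Cohen--Macaulay hypothesis it proves $H_i(\RHom_R(M,H\otimes_R^{\mathbf L}D))=0$ whenever $i$ lies below the bottom degree of a bounded free complex $H$. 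A short derived-category diagram chase then gives exactness of
\[
\Hom_R(M,F_1\otimes_R\omega)\longrightarrow\Hom_R(M,F_0\otimes_R\omega)\longrightarrow\Hom_R(M,(R/I)\otimes_R\omega),
\]
so $\tr_\omega(M)\subseteq I\omega$ would force $\Hom_R(M,\omega)=I\Hom_R(M,\omega)$, hence $\Hom_R(M,\omega)=0$, contradicting $\Hom_R(M,\omega)\cong H_0(\RHom_R(M,D))\neq 0$. No canonical element theorem enters.

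\textbf{Conductor step.} Your implication runs the wrong way. From $\mathfrak c=\tr_R(\overline R)\subseteq\tr_R(M)$ and $\tr_R(M)\not\subseteq I$ one cannot conclude $\mathfrak c\not\subseteq I$: a larger ideal escaping $I$ says nothing about the smaller one. What is needed is the \emph{reverse} containment $\tr_R(M)\subseteq\mathfrak c$ for some specific big Cohen--Macaulay $M$. The paper takes $M=B$ a big Cohen--Macaulay $R^{+}$-algebra and verifies $\tr_R(B)\subseteq\mathfrak c$ directly: for $x/y\in\overline R$ one has $x\in yR^{+}\cap R\subseteq yB\cap R$, and a contraction result of P\'erez--R.G.\ then gives $zx\in yR$ for every $z\in\tr_R(B)$.
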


\noindent The proof by Dey and Dutta utilized the vanishing of Tor and Ext modules, which arise from the maximal Cohen--Macaulay property or the finiteness of injective dimension of the canonical module.
However, these properties depend on the Cohen--Macaulayness of the ring.
Our main approach to addressing this problem is a careful analysis of the derived functors defined by the dualizing complex.
In this process, we observe that the finiteness of projective dimension can be replaced by standard conditions on rank and height.

Theorem \ref{main 2} generalizes not only the result of Dey and Dutta but also several other results.
When the ring is a complete local domain, the conductor contains the trace ideal of any big Cohen--Macaulay $R^{+}$-algebra.
Hence, in this case, Theorem \ref{main 2} refines Asgharzadeh's result in two directions.
In particular, combining with the result of Ikeda, we see that it is also a generalization of the monomial conjecture.
On the other hand, it is known that the parameter test ideal is the trace ideal of certain big Cohen--Macaulay algebra when the ring is a complete local domain of prime characteristic.
The corollary below, which follows from Theorem \ref{main 2}, improves Smith's result and the result of Dey and Dutta in a different direction from Theorem \ref{main 1}.

\begin{cor}[Corollary \ref{nonCM cor}]\label{main 3}
Let $R$ be a complete local domain of prime characteristic, $I$ a proper ideal of $R$, and $\omega$ a canonical module of $R$.
Suppose that $R/I$ has finite phantom projective dimension (for example, when $I$ has finite projective dimension or $I$ is a parameter ideal).
Then $\tau (\omega)\nsubseteq I\omega$.
In particular, if $R$ is quasi-Gorenstein, the parameter test ideal of $R$ is not contained in $I$.
\end{cor}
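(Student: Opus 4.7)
The plan is to deduce Corollary \ref{main 3} directly from Theorem \ref{main 2} by translating both the hypothesis and the conclusion into the common language of that theorem. The two missing dictionary entries are: (i) finite phantom projective dimension amounts, via the Hochster--Huneke phantom acyclicity criterion, to the existence of a bounded complex of free modules satisfying standard conditions on rank and height; and (ii) in characteristic $p$, the parameter test ideal of a complete local domain is the trace submodule of $\omega$ in a suitable big Cohen--Macaulay algebra.

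First I would verify the hypothesis of Theorem \ref{main 2}. By definition, $R/I$ has finite phantom projective dimension exactly when there is a bounded complex $F$ of finitely generated free $R$-modules with $H_0(F)=R/I$ whose higher homology is phantom; by the Hochster--Huneke phantom acyclicity criterion this is in turn equivalent to $F$ satisfying the standard conditions on rank and height. Thus $F$ meets the hypothesis of Theorem \ref{main 2}, and the two examples in parentheses (finite projective dimension, and parameter ideals in an arbitrary local ring of prime characteristic) are covered by the usual acyclicity lemma and Koszul complex, respectively.

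Next I would invoke a big Cohen--Macaulay $R$-algebra $B$ for which $\tau(\omega)=\tr_\omega(B)$; such a $B$ exists because $R$ is a complete local domain of prime characteristic, and this description of the parameter test ideal as a trace is precisely the fact emphasized in the introduction. Applying Theorem \ref{main 2} with $M=B$ gives $\tr_\omega(B)\nsubseteq I\omega$, hence $\tau(\omega)\nsubseteq I\omega$. For the final clause, if $R$ is quasi-Gorenstein then $\omega\cong R$; under this isomorphism $\tau(\omega)$ corresponds to the parameter test ideal $\tau(R)$ of $R$ and $I\omega$ to $I$, so $\tau(R)\nsubseteq I$.

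The main obstacle I expect is the bookkeeping in step one: one must match the formalism of ``finite phantom projective dimension'' with the ``standard conditions on rank and height'' required by Theorem \ref{main 2}. This is essentially the content of the phantom acyclicity theorem of Hochster--Huneke, so the work is a careful invocation rather than a new argument; once these identifications are in place, Corollary \ref{main 3} is a formal consequence of Theorem \ref{main 2}.
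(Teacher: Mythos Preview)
Your proposal is correct and follows essentially the same route as the paper: convert finite phantom projective dimension into the standard conditions on rank and height via the Hochster--Huneke criterion (the paper cites \cite[Theorem 9.8]{HH2}), identify $\tau(\omega)$ as $\tr_\omega(R^{+})$ for the big Cohen--Macaulay algebra $R^{+}$, and then apply Theorem \ref{main 2}. The only imprecision is that the phantom acyclicity criterion gives an implication rather than an equivalence (and uses that $R$ is reduced), but only that implication is needed here.
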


\noindent From the above, it is concluded that a stronger assertion than Questions \ref{ques 1} and \ref{ques 2} holds when the ring is a quasi-Gorenstein complete domain, not necessarily Cohen--Macaulay.

The organization of this paper is as follows. 
In Section 2, we study parameter test ideals, conductors, and $F$-ideals.
We prove Theorem \ref{main 1} and provide counterexamples to Questions \ref{ques 1} and \ref{ques 2}.
In Section 3, we consider the derived functors defined by the dualizing complex.
We show Theorem \ref{main 2} and Corollary \ref{main 3} by applying these functors to complexes of free modules satisfying the standard conditions.

\section{Parameter test ideal and $F$-ideal}

In this section, we investigate when the parameter test ideal and the conductor are contained in an ideal generated by a system of parameters or an ideal with finite projective dimension, in the case where the ring has prime characteristic.
First, we state the notation used in this paper and the basic facts required for the proofs.

\begin{conv}\label{conv}
Let $(R,\m)$ be a $d$-dimensional local ring of prime characteristic $p$.

(1) The \textit{Frobenius endomorphism} is the ring homomorphism $F:R\to R: a\mapsto a^p$.
For any $e>0$, we denote by $F^e:R\to R$ the $e$-th iterate of the Frobenius endomorphism, that is, $F^e(a)=a^{p^e}$ for $a\in R$.
Denote by $F^e_\ast (R)$ the target of $F^e$ as a module over the source.
Let $N\subseteq M$ be $R$-modules.
The \textit{tight closure} $N^\ast_M$ of $N$ in $M$ is defined as follows: for $z\in M$, $z\in N^\ast_M$ if there is $c\in R$, not in any minimal prime of $R$, such that $c\otimes z\in \image(F^e_\ast (R)\otimes_R N \to F^e_\ast (R)\otimes_R M)$ for all $e\gg 0$.
We write simply $N^\ast$ when there is no risk of confusion.

(2) Let $I$ be an ideal of $R$ and $q>0$ an integer.
The ideal generated by $q$-th powers of the elements of $I$ is denoted by $I^{[q]}$. 
If $I=(a_1,\ldots,a_n)$ and $q=p^e$ for some $e>0$, then $I^{[q]}=(a_1^q,\ldots,a_n^q)$ holds and there exists a natural isomorphism $F^e_\ast (R) \otimes_R R/I\cong R/I^{[q]}$.
For any $z\in R$, $z\in I^\ast$ if and only if there is $c\in R$, not in any minimal prime of $R$, such that $cz^{p^e}\in I^{[p^e]}$ for all $e\gg 0$.

(3) Let $x_1,\ldots, x_d$ be a system of parameters of $R$.
Then the family of maps 
$$
R/(x_1^n,\ldots, x_d^n)\to R/(x_1^{n+1},\ldots, x_d^{n+1}): a+(x_1^n,\ldots, x_d^n) \mapsto a\prod_{i=1}^d x_i +(x_1^{n+1},\ldots, x_d^{n+1})
$$ 
induces an isomorphism $\varinjlim_{n}R/(x_1^n,\ldots, x_d^n)\cong \H_\m^d(R)$.
An element of $\H_\m^d(R)$ is denoted by the equivalence class $[z+(x_1^n,\ldots, x_d^n)]$.
The natural Frobenius action on $\H_\m^d(R)$ is defined as follows:
$$
F:\varinjlim_{n}R/(x_1^n,\ldots, x_d^n) \to \varinjlim_{n}R/(x_1^{pn},\ldots, x_d^{pn}): [z+(x_1^n,\ldots, x_d^n)] \mapsto [z^p+(x_1^{pn},\ldots, x_d^{pn})].
$$
Following Smith \cite{S1}, an $R$-submodule $N$ of $\H_\m^d(R)$ is said to be \textit{$F$-stable} if $F(N)\subseteq N$.
An ideal $J$ of $R$ is an \textit{$F$-ideal} if the submodule $(0:_{\H_\m^d(R)}J)$ of $\H_\m^d(R)$ is $F$-stable.
Since tensor product commutes with direct limit, $\eta\in\H_\m^d(R)$ belongs to $0_{\H_\m^d(R)}^\ast$ if and only if for some fixed $c\in R$ which is not in any minimal prime of $R$, $cF^e(\eta)=0$ in $\H_\m^d(R)$ for all $e\gg 0$.

(4) Following Smith \cite{S2}, an ideal $J$ of a ring is said to be a \textit{parameter ideal} if it is generated by $\height J$ elements.
The \textit{parameter test ideal} of $R$ is the ideal of all elements $c\in R$ such that $cJ^\ast\subseteq J$ for every parameter ideal $J$. 
Let $\boldsymbol{x}=x_1,\ldots, x_r$ be a sequence in $R$.
The \textit{limit closure} $(\boldsymbol{x})^{\lim}$ of $\boldsymbol{x}$ is defined by $(\boldsymbol{x})^{\lim}=\bigcup_{n>0} ( (x_1^{n+1},\ldots, x_r^{n+1}):_{R} \prod_{i=1}^r x_i^n)$.
Suppose that $r=d$ and $\boldsymbol{x}$ is a system of parameters of $R$.
Then $z\in (\boldsymbol{x})^{\lim}$ if and only if $[z+(x_1,\ldots, x_d)]=0$ in $\varinjlim_{n}R/(x_1^n,\ldots, x_d^n)\cong \H_\m^d(R)$.
If $R$ is Cohen--Macaulay, $(\boldsymbol{x})^{\lim}=(x_1,\ldots, x_d)$ holds.
Let $S$ be a reduced ring with total ring of fraction $K$ and integral closure $\overline{S}$.
The \textit{conductor} of $S$ is $(S:_K \overline{S})$, which is the largest common ideal of $S$ and $\overline{S}$.
\end{conv}

There is also a convention to call an ideal of a local ring a parameter ideal if it is generated by part of a system of parameters.
Note that this coincides with the definition in Convention \ref{conv} when the ring is catenary and equidimensional, but in general, they may differ.
The following is a key fact for considering Questions \ref{ques 1} and \ref{ques 2} in the context of reduced local rings of prime characteristic.

\begin{rem}\label{conduc para test}
Let $R$ be a reduced local ring of prime characteristic and $\overline{R}$ the integral closure of $R$.
Then the parameter test ideal $J$ of $R$ is contained in the conductor $\mathfrak{C}$ of $R$.
In fact, let $z\in J$ and $x/y\in \overline{R}$.
Since $x/y$ is integral over $R$, $x$ belongs to the integral closure of $(y)$.
Then $x\in (y)^\ast$ by \cite[Theorem 5.4]{HH2}.
As $z\in J$ and $(y)$ is a parameter ideal, we have $zx\in(y)$ and thus $z(x/y)\in R$.
We conclude that $z\in (R:_K \overline{R})=\mathfrak{C}$.
%上の内容はSmithの結果とSWの演習問題の関係の説明の為に必要!
When $R$ is one-dimensional and excellent, the converse holds.
For any ideal $I$ of $R$, $I^\ast\subseteq I^\ast \overline{R}\subseteq (I \overline{R})^\ast=I \overline{R}$ holds as $\overline{R}$ is regular.
We obtain $\mathfrak{C}I^\ast \subseteq \mathfrak{C} I \overline{R}\subseteq I$.
\end{rem}

In \cite[Lemma 4.6]{S1}, a necessary and sufficient condition for an ideal of a Cohen--Macaulay local ring to be an $F$-ideal is provided in terms of a system of parameters.
To prove the main result of this section, we give its general case.

\begin{lem}\label{nonCM F-ideal equiv.}
Let $(R,\m)$ be a $d$-dimensional local ring of prime characteristic $p$.
For any ideal $J$ of $R$, the following are equivalent:
\begin{enumerate}[\rm(1)]
\item $J$ is an $F$-ideal; 
\item For any system of parameters $x_1,\ldots, x_d$ of $R$ and $z\in R$, if $Jz\subseteq (x_1,\ldots, x_d)^{\lim}$, then $Jz^p\subseteq (x_1^p,\ldots, x_d^p)^{\lim}$;
\item For some fixed system of parameters $x_1,\ldots, x_d$ of $R$, if there exists $z\in R$ and $t>0$ with the property that $Jz\subseteq (x_1^t,\ldots, x_d^t)^{\lim}$, then $Jz^p\subseteq (x_1^{pt},\ldots, x_d^{pt})^{\lim}$.
\end{enumerate}
\end{lem}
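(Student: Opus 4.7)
The plan is to translate all three conditions into statements about $\H_\m^d(R)$ via its direct-limit presentation, and then to read off the equivalences from a single dictionary.

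First I would record the basic dictionary, extending the identity stated in Convention \ref{conv}(4). Since $x_1^n,\ldots,x_d^n$ is again a system of parameters for any $n>0$, that identity applied to this s.o.p. yields $z\in(x_1^n,\ldots,x_d^n)^{\lim}$ iff $[z+(x_1^n,\ldots,x_d^n)]=0$ in $\H_\m^d(R)$. Combining with $j\cdot[z+(x_1^n,\ldots,x_d^n)]=[jz+(x_1^n,\ldots,x_d^n)]$ and $F([z+(x_1^n,\ldots,x_d^n)])=[z^p+(x_1^{np},\ldots,x_d^{np})]$ from Convention \ref{conv}(3), this gives the key dictionary: for $\eta=[z+(x_1^n,\ldots,x_d^n)]$, we have $J\eta=0$ iff $Jz\subseteq(x_1^n,\ldots,x_d^n)^{\lim}$, and $JF(\eta)=0$ iff $Jz^p\subseteq(x_1^{np},\ldots,x_d^{np})^{\lim}$.

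With this dictionary in hand, the $F$-stability of $(0:_{\H_\m^d(R)}J)$ becomes equivalent to the uniform statement $(\ast)$: for every s.o.p. $x_1,\ldots,x_d$, every $n>0$, and every $z\in R$, if $Jz\subseteq(x_1^n,\ldots,x_d^n)^{\lim}$ then $Jz^p\subseteq(x_1^{np},\ldots,x_d^{np})^{\lim}$. The three implications then follow cleanly. For (1)$\Rightarrow$(2) I would take the case $n=1$ of $(\ast)$. For (2)$\Rightarrow$(3) I would apply (2) to the s.o.p. $y_i:=x_i^t$, using $(y_1,\ldots,y_d)^{\lim}=(x_1^t,\ldots,x_d^t)^{\lim}$ and $(y_1^p,\ldots,y_d^p)^{\lim}=(x_1^{pt},\ldots,x_d^{pt})^{\lim}$. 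For (3)$\Rightarrow$(1) I would use that every $\eta\in\H_\m^d(R)$ is representable via the fixed s.o.p. of (3) in the form $[z+(x_1^t,\ldots,x_d^t)]$, so that the dictionary converts (3) verbatim into the $F$-stability of $(0:_{\H_\m^d(R)}J)$.

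The only ingredient that is not pure bookkeeping is the extension of the limit-closure/cohomology identity from Convention \ref{conv}(4) to powers $x_i^n$, and this is immediate once one substitutes $x_i^n$ for $x_i$ and observes that this is again a system of parameters. I do not anticipate a serious obstacle; the equivalence is essentially a careful unwinding of the definition of $F$-ideal in terms of the limit-closure description of zero in $\H_\m^d(R)$, and the role of (3) is simply to exploit the independence of $\H_\m^d(R)$ from the choice of s.o.p. used to present it.
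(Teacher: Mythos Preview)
Your proposal is correct and takes essentially the same approach as the paper: both arguments translate each condition into the statement that $J$ annihilates $F(\eta)$ whenever it annihilates $\eta$, via the identity $J\eta=0\Leftrightarrow Jz\subseteq(x_1^n,\ldots,x_d^n)^{\lim}$ for $\eta=[z+(x_1^n,\ldots,x_d^n)]$. Your framing in terms of an explicit dictionary and the intermediate statement $(\ast)$ is slightly more systematic, but the logical content of each implication is identical to the paper's.
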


\begin{proof}
The implication (2)$\Rightarrow$(3) is obvious.
Suppose that $J$ is an $F$-ideal.
If $Jz\subseteq (x_1,\ldots, x_d)^{\lim}$ for $z\in R$, then $J [z+(x_1,\ldots, x_d)]=0$ in $\varinjlim_{n} R/(x_1^n,\ldots, x_d^n)\cong \H_\m^d(R)$.
Since $J$ is an $F$-ideal, $[z^p+(x_1^p,\ldots, x_d^p)]\in \varinjlim_{n} R/(x_1^{pn},\ldots, x_d^{pn})\cong\H_\m^d(R)$ is annihilated by $J$ as well.
By this, we have $Jz^p\subseteq (x_1^p,\ldots, x_d^p)^{\lim}$.
Thus (1)$\Rightarrow$(2) holds.
To prove (3)$\Rightarrow$(1), we assume that the condition (3) holds.
Let $[z+(x_1^t,\ldots, x_d^t)]$ be an element of $\varinjlim_{n} R/(x_1^n,\ldots, x_d^n)\cong\H_\m^d(R)$ annihilated by $J$, where $z\in R$.
Then $Jz\subseteq (x_1^t,\ldots, x_d^t)^{\lim}$.
We obtain $Jz^p\subseteq (x_1^{pt},\ldots, x_d^{pt})^{\lim}$ by assumption.
This yields that $[z^p+(x_1^{pt},\ldots, x_d^{pt})]\in \varinjlim_{n} R/(x_1^{pn},\ldots, x_d^{pn})\cong\H_\m^d(R)$ is annihilated by $J$, which means that $J$ is an $F$-ideal.
\end{proof}

Smith \cite{S2} proved that a non-zero $F$-ideal of Cohen--Macaulay local ring is not contained in an ideal generated by a system of parameters.
The following theorem refines this result.

\begin{thm}\label{F-ideal and fpd}
Let $R$ be a local ring of prime characteristic $p$, $I$ a proper ideal of $R$ of finite projective dimension, and $J$ a non-zero $F$-ideal.
If $R$ is either a Cohen--Macaulay ring or an excellent equidimensional reduced ring, then $J\nsubseteq I$.
\end{thm}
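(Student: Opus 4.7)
The plan is to reduce the statement to Smith's original argument for parameter ideals. I would use the theorem of Hochster--Yao advertised in the introduction to pass from ``finite projective dimension'' of $R/I$ to a containment of $I$ in the limit closure of a system of parameters, and then iterate Frobenius via Lemma~\ref{nonCM F-ideal equiv.} to derive a contradiction.

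Assume for contradiction that $J\subseteq I$. Since $R/I$ has finite projective dimension, the Hochster--Yao theorem should yield a system of parameters $x_1,\dots,x_d$ of $R$ with $I\subseteq (x_1,\dots,x_d)^{\lim}$; this is precisely where the structural hypothesis on $R$ (Cohen--Macaulay, or excellent equidimensional reduced) enters. In particular $J\cdot 1\subseteq (x_1,\dots,x_d)^{\lim}$. Applying Lemma~\ref{nonCM F-ideal equiv.}(3) with $z=1$ and inducting on $e$, we obtain
\[
J\;\subseteq\;(x_1^{p^e},\dots,x_d^{p^e})^{\lim}\qquad\text{for all }e\geq 0.
\]

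In the Cohen--Macaulay case, the limit closure equals the parameter ideal, so $J\subseteq\bigcap_{e}(x_1^{p^e},\dots,x_d^{p^e})=0$ by Krull's intersection theorem, contradicting $J\neq 0$. In the excellent equidimensional reduced case, the limit closure can strictly exceed the parameter ideal, so I would combine the standard inclusion $(\boldsymbol{x})^{\lim}\subseteq(\boldsymbol{x})^{\ast}$ for a system of parameters in an equidimensional ring with a test element $c$ (which exists by excellence and reducedness) to reduce limit closures of parameter ideals to the ideals themselves up to a $c$-multiple. This yields $cJ\subseteq(x_1^{p^e},\dots,x_d^{p^e})$ for every $e$, hence $cJ=0$ by Krull, and therefore $J=0$ because $c$ is a non-zero-divisor on a reduced ring.

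The principal obstacle is the first step: verifying the precise form of Hochster--Yao that produces the containment $I\subseteq(x_1,\dots,x_d)^{\lim}$ (or a Frobenius-twisted variant thereof) in both the Cohen--Macaulay and the excellent equidimensional reduced settings, where the interplay between finite projective dimension, phantom projective dimension, and the limit/tight closure of a system of parameters requires careful handling. Once that input is available, the Frobenius iteration via Lemma~\ref{nonCM F-ideal equiv.} and the test-element reduction in the non-Cohen--Macaulay case are essentially mechanical.
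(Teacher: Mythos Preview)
Your principal obstacle is a genuine gap. The Hochster--Yao theorem does not produce a containment $I\subseteq(x_1,\dots,x_d)^{\lim}$; it yields an \emph{embedding} $\varphi:R/I\hookrightarrow\bigoplus_{i=0}^{s}(R/(x_1,\dots,x_i))^{n_i}$ whose cokernel $N$ again has finite projective dimension, where $x_1,\dots,x_s$ is an $R$-regular sequence and $s=\pd_R(R/I)$. There is no reason to expect $I$ itself to lie in the limit closure of a single system of parameters, and the paper never attempts this. Instead, writing $\varphi$ via a matrix $(r_{ij})$, the hypothesis $J\subseteq I$ gives $Jr_{ij}\subseteq(x_1,\dots,x_i)$; after padding to a full system of parameters one applies Lemma~\ref{nonCM F-ideal equiv.} with $z=r_{ij}$ (not $z=1$) to obtain $Jr_{ij}^{p^e}\subseteq(x_1^{p^e},\dots,x_i^{p^e},x_{i+1}^{mp^e},\dots,x_d^{mp^e})^{\lim}$ for all $m,e$. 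Only here do the structural hypotheses enter, to strip the limit closure (directly in the Cohen--Macaulay case, via a test element in the excellent equidimensional reduced case), giving $Jr_{ij}^{p^e}\subseteq(x_1^{p^e},\dots,x_i^{p^e})$ after intersecting over $m$.

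The step you are missing entirely is the use of Peskine--Szpiro: since $N$ has finite projective dimension, $\operatorname{Tor}_1^R(F^e_\ast R,N)=0$, so $F^e_\ast R\otimes_R\varphi$ remains \emph{injective} and identifies $R/I^{[p^e]}$ with its image. The containments $Jr_{ij}^{p^e}\subseteq(x_1^{p^e},\dots,x_i^{p^e})$ then say precisely that $J$ annihilates this image, whence $J\subseteq I^{[p^e]}$ for every $e$ and $J\subseteq\bigcap_e I^{[p^e]}=0$. Your endgame (Frobenius iteration via Lemma~\ref{nonCM F-ideal equiv.}, test element, Krull) is correct in spirit, but the reduction you propose is essentially Remark~\ref{para ver of main 1}, which only handles the case where $I$ is already a parameter ideal; the passage from finite projective dimension to that situation is not available.
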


\begin{proof}
Suppose $J\subseteq I$.
Put $s=\pd_R (R/I)$ and $d=\dim R$.
By \cite[Theorem 2.3]{HY}, there exist a proper $R$-regular sequence $x_1,\ldots,x_s$, non-negative integers $n_0, \ldots, n_s$, and a short exact sequence
$$
0\to R/I \xrightarrow{\varphi} \oplus_{i=0}^s (R/(x_1,\ldots, x_i))^{n_i}\to N\to 0.
$$
Note that $N$ has finite projective dimension.
We take $x_{s+1},\ldots, x_d$ such that $x_1,\ldots,x_s, x_{s+1},\ldots, x_d$ is a system of parameters of $R$.
Let $\varphi$ be given by the matrix $(r_{ij})_{0\le i\le s, 1\le j\le n_i}$, where $r_{ij}\in R$.
For each $e>0$, applying $F^e_\ast (R)\otimes_R (-)$ to the minimal free resolution of $N$, we have $\operatorname{Tor}_1^R(F^e_\ast (R), N)=0$ by \cite[Th\'{e}or\`{e}me 1.7]{PS} and obtain the exact sequence
\begin{equation}
0\to R/I^{[p^e]} \xrightarrow{F^e_\ast (R)\otimes \varphi} \oplus_{i=0}^s (R/(x_1^{p^e},\ldots, x_i^{p^e}))^{n_i}\to F^e_\ast (R)\otimes_R N\to 0 \tag*{(\ref{F-ideal and fpd}.1)}.
\end{equation}
Then $F^e_\ast (R)\otimes \varphi$ is given by the matrix $(r_{ij}^{p^e})_{0\le i\le s, 1\le j\le n_i}$.
Since $J$ is contained in $I$, $J$ annihilates $\image \varphi$.
This means that $Jr_{ij} \subseteq (x_1,\ldots, x_i)$ for all $0\le i\le s$ and $1\le j\le n_i$, and hence $Jr_{ij} \subseteq (x_1,\ldots, x_i, x_{i+1}^m, \ldots,x_d^m)$ for any $m>0$.
Lemma \ref{nonCM F-ideal equiv.} says that $Jr_{ij}^{p^e} \subseteq (x_1^{p^e},\ldots, x_i^{p^e}, x_{i+1}^{m p^e}, \ldots,x_d^{mp^e})^{\lim}$ for all $e>0$.

If $R$ is Cohen--Macaulay, then $(x_1^{p^e},\ldots, x_i^{p^e}, x_{i+1}^{m p^e}, \ldots,x_d^{mp^e})^{\lim}=(x_1^{p^e},\ldots, x_i^{p^e}, x_{i+1}^{m p^e}, \ldots,x_d^{mp^e})$ for every $i,m,e$.
We have $Jr_{ij}^{p^e} \subseteq \bigcap_{m>0} (x_1^{p^e},\ldots, x_i^{p^e}, x_{i+1}^{m p^e}, \ldots,x_d^{mp^e})=(x_1^{p^e},\ldots, x_i^{p^e})$ for all $0\le i\le s$, $1\le j\le n_i$, and $e>0$.
Thus for any $e>0$, $J$ annihilates $\image (F^e_\ast (R)\otimes \varphi)$, which means that $J\subseteq I^{[p^e]}$ by {(\ref{F-ideal and fpd}.1)}.
We get $J\subseteq \bigcap_{e>0} I^{[p^e]} =0$, a contradiction.

If $R$ is excellent, equidimensional, and reduced, it is a homomorphic image of a Cohen--Macaulay local ring by \cite[Corollary 1.2]{Kaw}.
Then  for each $i,j,m,e$, $Jr_{ij}^{p^e} \subseteq(x_1^{p^e},\ldots, x_i^{p^e}, x_{i+1}^{m p^e}, \ldots,x_d^{mp^e})^{\lim}\subseteq (x_1^{p^e},\ldots, x_i^{p^e}, x_{i+1}^{m p^e}, \ldots,x_d^{mp^e})^\ast$ by \cite[Theorem 2.3]{Hu}.
It follows from \cite[Theorem 6.1(a)]{HH} that there exists $c\in R$, not in any minimal prime ideal of $R$, such that $c Jr_{ij}^{p^e} \subseteq (x_1^{p^e},\ldots, x_i^{p^e}, x_{i+1}^{m p^e}, \ldots,x_d^{mp^e})$.
A similar argument to that in the previous paragraph shows that $c J=0$.
Since $R$ is reduced, $c$ is a non-zero-divisor of $R$.
Hence $J=0$, a contradiction.
\end{proof}

\begin{rem}\label{para ver of main 1}
If $R$ is a Cohen--Macaulay local ring of prime characteristic $p$, any parameter ideal has finite projective dimension.
Therefore, the former part of Theorem \ref{F-ideal and fpd} derives the latter part of \cite[Proposition 6.1]{S2}.
On the other hand, we see that any non-zero $F$-ideal is not contained in a parameter ideal when $R$ is excellent, equidimensional, and reduced.
In fact, if an $F$-ideal $J$ is contained in a parameter ideal $I$, there exists a system of parameters $x_1,\ldots, x_d$ such that $J\subseteq (x_1,\ldots, x_d)$ by extending a minimal system of generator of $I$.
By Lemma \ref{nonCM F-ideal equiv.}, we have $J\subseteq (x_1^{p^e},\ldots, x_d^{p^e})^{\lim}$ for any $e>0$.
An analogous argument to the last paragraph of the proof of Theorem \ref{F-ideal and fpd} shows that there exists a non-zero-divisor $c\in R$ such that $c J\subseteq (x_1^{p^e},\ldots, x_d^{p^e})$ for all $e>0$.
We obtain $c J\subseteq \bigcap_{e>0} (x_1^{p^e},\ldots, x_d^{p^e})=0$ and thus $J=0$.
\end{rem}

Smith \cite{S2} also showed that the parameter test ideal is an $F$-ideal under certain assumptions; see Remark \ref{others remark}(4).
Below is a direct corollary of Theorem \ref{F-ideal and fpd} and Remark \ref{para ver of main 1}.

\begin{cor}\label{Check}
Let $R$ be a quasi-Gorenstein equidimensional reduced complete local ring of prime characteristic, and $I$ either a proper ideal of finite projective dimension or a parameter ideal of $R$.
Then the parameter test ideal and the conductor of $R$ are not contained in $I$.
\end{cor}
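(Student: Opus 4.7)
The plan is to reduce everything to the $F$-ideal results already proved in this section. The key claim I would try to establish first is that, under the hypotheses of the corollary, the parameter test ideal $\tau$ of $R$ is a non-zero $F$-ideal. Once this is in hand, Theorem~\ref{F-ideal and fpd} yields $\tau \nsubseteq I$ when $I$ has finite projective dimension, and Remark~\ref{para ver of main 1} yields $\tau \nsubseteq I$ when $I$ is a parameter ideal. The statement for the conductor $\mathfrak{C}$ then follows purely formally: by Remark~\ref{conduc para test} we have $\tau \subseteq \mathfrak{C}$, so any element of $\tau$ not contained in $I$ also witnesses that $\mathfrak{C} \nsubseteq I$.

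The heart of the argument is therefore the reduction to the $F$-ideal hypothesis. For the non-vanishing of $\tau$, I would invoke the existence of test elements: since $R$ is complete (hence excellent), reduced, and equidimensional, the standard theory of tight closure produces a non-zero element of $\tau$. For the $F$-ideal property, I would appeal to Smith's theorem cited in Remark~\ref{others remark}(4); the combination of quasi-Gorenstein, equidimensional, reduced, and complete is precisely tailored to that result. The conceptual content is an identification of $\tau$ with $\Ann_R(0^\ast_{\H_\m^d(R)})$, at which point the $F$-stability of $0^\ast_{\H_\m^d(R)}$ in $\H_\m^d(R)$, immediate from the definition of tight closure, translates directly into the $F$-ideal property of $\tau$.

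The main obstacle I anticipate is verifying that $\tau$ is an $F$-ideal outside the Cohen--Macaulay setting, where one cannot freely exploit that every system of parameters is regular. This is where the quasi-Gorenstein hypothesis plays its essential role, giving enough control over the top local cohomology $\H_\m^d(R)$ (in particular through Matlis duality and the compatibility of tight closure with the Frobenius action) to identify $\tau$ with the annihilator above. Once this identification is available, the remainder is a direct combination of Theorem~\ref{F-ideal and fpd}, Remark~\ref{para ver of main 1}, and the containment $\tau \subseteq \mathfrak{C}$ of Remark~\ref{conduc para test}.
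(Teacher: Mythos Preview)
Your proposal is correct and matches the paper's proof essentially line for line: the paper's argument is precisely to invoke Remark~\ref{others remark}(4) to see that the parameter test ideal is a non-zero $F$-ideal, then apply Theorem~\ref{F-ideal and fpd} and Remark~\ref{para ver of main 1} (noting that complete implies excellent), and finally use Remark~\ref{conduc para test} to handle the conductor.
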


\begin{proof}
The assertion follows from Theorem \ref{F-ideal and fpd}, Remarks \ref{conduc para test}, \ref{para ver of main 1}, and \ref{others remark}(4).
\end{proof}

%ここの行間にはコメント不要

Remark \ref{para ver of main 1} shows that Cohen--Macaulayness of the ring is not necessarily required to determine whether an $F$-ideal is contained in an ideal generated by a system of parameters.
Such observations about Cohen--Macaulayness can also be inferred from the results in Section 3.
Conversely, these observation suggest that even for Cohen--Macaulay rings with sufficiently good properties, there may be counterexamples to Questions \ref{ques 1} and \ref{ques 2}.
Indeed, two counterexamples are provided in the remainder of this section.
We give a simple remark.

\begin{rem}\label{domain rem}
Let $K$ be a field, and let $a,b$ be positive integers that are coprime.
Assume that $S$ is a polynomial ring $K[x_1,\ldots,x_n]$ (resp. a formal power series ring $K\llbracket x_1,\ldots,x_n\rrbracket$) over $K$ and that $R$ is $K[x_1,\ldots,x_n, y]/(y^a-\alpha)$ (resp. $K\llbracket x_1,\ldots,x_n, y\rrbracket /(y^a-\alpha)$), where $\alpha\in S$ such that $\alpha \in z^b S\setminus z^{b+1} S$ for some prime element $z\in S$.
Then $R=S[y]/(y^a-\alpha)$.
Let $T$ be the fraction field of $S$.
Since $y^a-\alpha$ is a monic polynomial, $R$ is flat over $S$.
Applying $(-)\otimes_S R$ to the natural injection $S\to T$, we see that $R\to T[y]/(y^a-\alpha)$ is injective.
By \cite[Chapter 8, Theorem 1.6]{Kar}, $y^a-\alpha$ is irreducible in $T[y]$.
In fact, if $\alpha=(f/g)^p$ in $T$ for some $f,g\in S$ and some prime $p$ dividing $a$, then $\alpha g^p=f^p$ in $S$.
Since $S$ is a unique factorization domain and $\alpha \in z^b S\setminus z^{b+1} S$ for some prime element $z$, this contradicts the fact that $b$ is not divisible by $p$.
Similarly, when $a$ is divisible by 4, we see that $\alpha\ne -4(f/g)^4$ for any $f,g\in S$.
Hence $T[y]/(y^a-\alpha)$ is an integral domain, and so is $R$.
\end{rem}

The following two examples were proposed by Linquan Ma, and proofs were provided by him and the author.
The author is indebted to him for his offer to include them in this paper.
The following is an example of a Cohen--Macaulay reduced complete local ring where the parameter test ideal is not an $F$-ideal, providing a counterexample to \cite[Proposition 4.5]{S2}.

\begin{ex}\label{count. para. F-ideal CM}
Let $K$ be a field of characteristic $p>0$, $R=K \llbracket x, y, z, v, w \rrbracket/(xy, xz, yz, yv, zw, x^2-vw)$ a quotient of a formal power series ring over $K$, and $\m=(x, y, z, v, w)R$.

\begin{claim}
$(xy, xz, yz, yv, zw, x^2-vw)=(x,y,w)\cap(x,z,v)\cap(y,z,x^2-vw)$ is a primary decomposition of $(xy, xz, yz, yv, zw, x^2-vw)$ in $K \llbracket x, y, z, v, w \rrbracket$.
Hence $R$ is reduced, equidimensional, and of dimension 2.
\end{claim}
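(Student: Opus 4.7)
The plan is to verify the decomposition in two halves and then extract reducedness and equidimensionality from it. First I would check that each of the three ideals on the right is prime in $S:=K\llbracket x,y,z,v,w\rrbracket$. For $P_1:=(x,y,w)$ and $P_2:=(x,z,v)$ this is immediate since the quotients are power series rings in two variables. For $P_3:=(y,z,x^2-vw)$, the quotient is $K\llbracket x,v,w\rrbracket/(x^2-vw)$; the polynomial $x^2-vw$ is irreducible in $K\llbracket x,v,w\rrbracket$ (it is an $A_1$ singularity, and for instance by Eisenstein in $v$ over $K\llbracket x,w\rrbracket$), so $P_3$ is prime. The inclusion $I\subseteq P_1\cap P_2\cap P_3$ is a routine check: each of the six generators of $I$ visibly lies in all three primes.

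The substantive half is the reverse containment $P_1\cap P_2\cap P_3\subseteq I$. The plan is to compute $P_1\cap P_2$ first. A general element of $P_1$ has the form $ax+by+cw$; requiring it to lie in $P_2=(x,z,v)$ forces $b,c\in(x,z,v)$, so one reads off
\[
P_1\cap P_2=(x,\,yz,\,yv,\,zw,\,vw).
\]
Three of these generators, $yz,yv,zw$, already sit in $I$, and the fourth satisfies $vw\equiv x^2\pmod I$; hence $P_1\cap P_2\subseteq (x)+I$. Since $I\subseteq P_3$, the modular law gives
\[
P_1\cap P_2\cap P_3=\bigl((x)+I\bigr)\cap P_3=\bigl((x)\cap P_3\bigr)+I.
\]

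It remains to show $(x)\cap P_3\subseteq I$. The key observation is that $x$ is a nonzerodivisor modulo $P_3$: indeed, $S/P_3\cong K\llbracket x,v,w\rrbracket/(x^2-vw)$ is a domain in which $x\neq 0$. Therefore $(x)\cap P_3=xP_3=(xy,\,xz,\,x(x^2-vw))$, and all three of these generators clearly lie in $I$ (the first two are generators of $I$, and $x(x^2-vw)$ is a multiple of $x^2-vw\in I$). This closes the loop and proves $I=P_1\cap P_2\cap P_3$.

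Finally, since each $P_i$ is prime and the decomposition is irredundant (witnessed by $x\in(P_1\cap P_2)\setminus P_3$, $wy\in(P_1\cap P_3)\setminus P_2$, and $vz\in(P_2\cap P_3)\setminus P_1$), this is a minimal primary decomposition. Reducedness of $R$ follows because $I$ is an intersection of primes, and each $\dim S/P_i=2$, so $R$ is equidimensional of dimension $2$. The main obstacle is step three, the computation of $(x)\cap P_3$; everything hinges on the fact that $x$ is regular on $S/P_3$, which in turn relies on the irreducibility of $x^2-vw$.
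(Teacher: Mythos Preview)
Your proof is correct and takes a genuinely different route from the paper's. The paper parametrizes an arbitrary element of $P_3$ as $Ay+Bz+C(x^2-vw)$, assumes it lies in $P_1\cap P_2$, and deduces $A\in(x,z,v)$ and $B\in(x,y,w)$ by reducing modulo the other two primes, so that each summand visibly lies in $I$. You instead compute $P_1\cap P_2=(x,yz,yv,zw,vw)$ first, observe that this equals $(x)+I$, and then use the modular law together with the regularity of $x$ on $S/P_3$ to collapse $((x)+I)\cap P_3$ down to $I$. The paper's argument is symmetric in $y$ and $z$ and avoids computing any pairwise intersection; yours isolates a single regularity fact, $(x)\cap P_3=xP_3$, as the hinge, which is clean and conceptually pleasant. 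Two small wording issues that do not affect correctness: the assertion that membership of $ax+by+cw$ in $P_2$ ``forces $b,c\in(x,z,v)$'' is not literally true for a fixed presentation (take $b=w$, $c=-y$), but one can always \emph{rechoose} $b,c$ in $P_2$ because $y,w$ is a regular sequence modulo $P_2$, so your formula for $P_1\cap P_2$ stands; and ``Eisenstein in $v$'' should read ``Eisenstein in $x$ with respect to the prime $v$ of $K\llbracket v,w\rrbracket$'' (the element is linear in $v$), after which Weierstrass preparation transfers irreducibility to the power series ring, matching the paper's appeal to its remark on domains.
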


The inclusion $(xy, xz, yz, yv, zw, x^2-vw)\subseteq (x,y,w)\cap(x,z,v)\cap(y,z,x^2-vw)$ is obvious.
To prove the converse, consider $Ay+Bz+C(x^2-vw)\in (x,y,w)\cap(x,z,v)$, where $A,B,C\in K \llbracket x, y, z, v, w \rrbracket$.
Since $Ay\in (x,y,w)\cap(x,z,v)+(z, x^2-vw)\subseteq (x,z,v)$ and $Bz\in (x,y,w)\cap(x,z,w)+(y, x^2-vw)\subseteq (x,y,w)$, we obtain $A\in (x,z,v)$ and $B\in (x,y,w)$.
Therefore, $Ay+Bz+C(x^2-vw)\in (xy, xz, yz, yv, zw, x^2-vw)$.
We get $(xy, xz, yz, yv, zw, x^2-vw)\supseteq (x,y,w)\cap(x,z,v)\cap(y,z,x^2-vw)$.
Since $(x,y,w)$, $(x,z,v)$, and $(y,z,x^2-vw)$ are prime ideals of height 3 (see Remark \ref{domain rem}), we see that $R$ is reduced, equidimensional, and of dimension 2.

\begin{claim}
$v+w,y+z+v$ is an $R$-regular sequence.
In particular, $R$ is Cohen--Macaulay.
\end{claim}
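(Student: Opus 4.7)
The plan is to prove the regular-sequence claim via the length-equals-multiplicity criterion for a system of parameters: if $\boldsymbol{x}$ is a system of parameters in a local ring $R$, then $\boldsymbol{x}$ is an $R$-regular sequence if and only if $\ell(R/(\boldsymbol{x})) = e((\boldsymbol{x}); R)$. Once this is verified here, Cohen--Macaulayness follows at once since $\dim R = 2$ and a regular sequence of that length has been exhibited.

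First I would show $J := (v+w,\,y+z+v)$ is a system of parameters by computing $R/J$ explicitly. Substituting $w = -v$ and $y = -z-v$ into the six defining generators of $R$ and simplifying the six resulting elements of $K\llbracket x, z, v\rrbracket$, one sees that together they generate exactly the square of the maximal ideal (e.g.\ $xz$ appears directly, $xv$ arises from $xy - xz$, $z^2$ from $yz - zw$, $v^2$ from $yv - zw$, and $x^2$ from $x^2 - vw$ after reducing mod $v^2$). Hence
\[
R/J \;\cong\; K\llbracket x, z, v\rrbracket/(x, z, v)^2,
\]
an Artinian ring of $K$-length $4$.

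Next I would compute $e(J; R)$ via the associativity formula for Hilbert--Samuel multiplicity. By Claim 1, $R$ is reduced and equidimensional of dimension $2$ with minimal primes $P_1 = (x,y,w)$, $P_2 = (x,z,v)$, $P_3 = (y,z,x^2-vw)$, and each localization $R_{P_i}$ is a field, so
\[
e(J; R) \;=\; \sum_{i=1}^{3} e\bigl(J;\, R/P_i\bigr).
\]
Each quotient $R/P_i$ is Cohen--Macaulay of dimension two (the first two are regular power series rings $K\llbracket z,v\rrbracket$ and $K\llbracket y,w\rrbracket$, while $R/P_3 \cong K\llbracket x,v,w\rrbracket/(x^2-vw)$ is a hypersurface), so each summand equals the colength of the image of $J$. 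A direct calculation identifies these images as $(v, z)$, $(w, y)$, and $(v+w, v) = (v, w)$, respectively. The first two contribute colength $1$ each, and the third contributes $\ell(K\llbracket x\rrbracket/(x^2)) = 2$. Thus $e(J;R) = 4$.

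Comparing the two computations yields $\ell(R/J) = 4 = e(J; R)$, so by the standard criterion the system of parameters $v+w,\,y+z+v$ is an $R$-regular sequence, and $R$ is Cohen--Macaulay. The main obstacle is handling the singular component: one must observe that the image of $J$ in the hypersurface $R/P_3 = K\llbracket x,v,w\rrbracket/(x^2-vw)$ simplifies to $(v, w)$, and that killing $v, w$ there leaves $K\llbracket x\rrbracket/(x^2)$ of length $2$ rather than $1$; the length computation in Step 1 also demands some bookkeeping to verify that the six reduced generators really do exhaust $(x, z, v)^2$.
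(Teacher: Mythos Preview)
Your argument is correct and takes a genuinely different route from the paper's. The paper proceeds by direct verification: it observes from Claim~1 that $v+w$ lies in none of the three associated primes of $R$, hence is a non-zero-divisor; then it computes $R/(v+w)R\cong K\llbracket X,Y,Z,V\rrbracket/(XY,XZ,YZ,YV,ZV,X^2-V^2)$, writes down a primary decomposition of this new ideal (which splits into two cases according to whether $p=2$), and checks that $y+z+v$ avoids every associated prime there. Your approach instead computes both $\ell(R/J)$ and $e(J;R)$ and invokes the length-equals-multiplicity criterion, feeding the associativity formula with the primary decomposition already supplied by Claim~1.

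Your method buys two things: it is characteristic-free (no case split at $p=2$), and it requires no \emph{new} primary decomposition beyond that of Claim~1. The paper's method, by contrast, is more elementary in that it avoids multiplicity theory entirely, but at the cost of a second decomposition computation in four variables. One cosmetic point: your parenthetical ``$xv$ arises from $xy-xz$'' is slightly loose, since after substitution $xy\mapsto -xz-xv$, and it is the \emph{sum} of the images of $xy$ and $xz$ that yields $-xv$; similarly for the others. This does not affect the conclusion that the reduced generators span $(x,z,v)^2$, and indeed the paper later records the same isomorphism $R/J\cong K\llbracket X,Y,Z\rrbracket/(X,Y,Z)^2$ in its Claim~3.
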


By Claim 1, $v+w$ is not in any associated prime ideal of $R$.
So, it is a non-zero-divisor of $R$.
There exists the natural ring isomorphism $R/(v+w)R\cong K \llbracket X, Y, Z, V \rrbracket/(XY, XZ, YZ, YV, ZV, X^2-V^2)$.
Following a similar reasoning as in the proof of Claim 1, the primary decomposition of $(XY, XZ, YZ, YV, ZV, X^2-V^2)$ is $(X,Y,V)\cap(X,Z,V)\cap(Y,Z,(X+V)^2)$ if $p=2$, and $(X,Y,V)\cap(X,Z,V)\cap(Y,Z,X+V)\cap(Y,Z,X-V)$ otherwise.
(In fact, if $p\ne 2$, for $AY+BZ+C(X-V)\in (X,Y,V)\cap(X,Z,V)\cap(Y,Z,X+V)$, where $A,B,C\in K \llbracket X, Y, Z, V \rrbracket$, we get $A\in (X,Z,V)$, $B\in (X,Y,V)$, and $C\in(Y,Z,X+V)$.)
We see that $y+z+v$ is a non-zero-divisor on $R/(v+w)R$.

\begin{claim}
$I:=(v+w,y+z+v)R$ is not tightly closed in $R$.
In particular, $R$ is not $F$-rational.
\end{claim}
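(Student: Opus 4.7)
The plan is to exhibit an element $u \in I^\ast \setminus I$. Such a $u$ shows that $I$ is not tightly closed, and since $I$ is a parameter ideal in the Cohen--Macaulay ring $R$ (Claim 2), this also implies that $R$ is not $F$-rational. I would take $u = y$ with test element $c = y + v$.

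To verify $y \notin I$, substitute $w \equiv -v$ and $y \equiv -(z+v) \pmod{I}$ into the defining relations of $R$; after simplification these force $xz = xv = zv = x^2 = z^2 = v^2 = 0$ in $R/I$, so $R/I \cong K\llbracket x,z,v\rrbracket/(xz,\, xv,\, zv,\, x^2,\, z^2,\, v^2)$, a local $K$-algebra of length $4$ with basis $\{1, \bar x, \bar z, \bar v\}$ and square-zero maximal ideal. The image of $y$ is $-\bar z - \bar v \neq 0$, so $y \notin I$. Next, the images of $c = y + v$ in $R/P_1 \cong K\llbracket z,v\rrbracket$, $R/P_2 \cong K\llbracket y,w\rrbracket$, and $R/P_3 \cong K\llbracket x,v,w\rrbracket/(x^2-vw)$ are $v$, $y$, and $v$, respectively, each nonzero; hence $c \notin P_1 \cup P_2 \cup P_3$.

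For the main tight closure calculation, I would exploit the orthogonality relations $vy = yz = 0$ coming from the definition of $R$. For every $e \geq 1$, the relation $vy = 0$ gives $v y^{p^e} = (vy) y^{p^e - 1} = 0$, hence
\[
c \cdot y^{p^e} = (y+v)\, y^{p^e} = y^{p^e + 1}.
\]
Since Frobenius is additive in characteristic $p$, one has $(y+z+v)^{p^e} = y^{p^e} + z^{p^e} + v^{p^e}$ in $R$, and multiplying by $y$ and using $yz = yv = 0$ yields
\[
y \cdot (y+z+v)^{p^e} = y^{p^e + 1} + y z^{p^e} + y v^{p^e} = y^{p^e + 1}.
\]
Because $(y+z+v)^{p^e}$ is one of the two generators of $I^{[p^e]}$, it follows that $y^{p^e + 1} \in I^{[p^e]}$, and therefore $c \cdot y^{p^e} \in I^{[p^e]}$ for every $e \geq 1$. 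Thus $y \in I^\ast \setminus I$, as desired.

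The main subtlety is the choice of $u$: the naive candidate $u = x$ fails because $R/P_3$ is the $F$-regular $A_1$-singularity $K\llbracket a^2, ab, b^2\rrbracket \subset K\llbracket a,b\rrbracket$ (via $x \mapsto ab$, $v \mapsto a^2$, $w \mapsto b^2$), the image of $I$ there is the parameter ideal $(v, w)$, and $\bar x \notin (v, w)$, so reduction modulo $P_3$ would rule out $x \in I^\ast$. Taking $u = y \in P_3$ removes this obstruction, and the relations $yv = yz = 0$ let the Frobenius computation collapse to the one-line identity $y \cdot (y+z+v)^{p^e} = y^{p^e+1}$.
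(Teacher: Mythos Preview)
Your proof is correct and follows essentially the same strategy as the paper: both show $y\in I^\ast\setminus I$ by using the relations $yz=yv=0$ to collapse $y(y+z+v)^{q}$ to $y^{q+1}$. The only difference is the choice of multiplier: the paper uses $c=x-y+z$ and writes out the identity $(x-y+z)y^{q}=(xy)y^{q-1}+(yz)(y^{q-1}+z^{q-1})+(yv)v^{q-1}-(y+z+v)^{q}y$, whereas your choice $c=y+v$ gives the even shorter computation $(y+v)y^{q}=y^{q+1}=y(y+z+v)^{q}$ directly from $yv=yz=0$. Your description of $R/I$ agrees with the paper's, and your observation about why $x$ would fail (via the $F$-regularity of $R/P_3$) is exactly the content of the paper's Claim~4.
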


The natural isomorphism $R/I\cong K \llbracket X, Y, Z\rrbracket/(XY, XZ, YZ, Y^2, Z^2, X^2)$ implies that $yR$ is not contained in $I$.
We have 
$(x-y+z)y^q=(xy)y^{q-1}+(yz)(y^{q-1}+z^{q-1})+(yv)v^{q-1}-(y+z+v)^q y$ for any $e>0$ and $q=p^e$.
Hence $(x-y+z)y^q R$ is contained in $I^{[q]}$, which deduces that $yR$ is contained in $I^\ast$ since $x-y+z \notin (x,y,w)\cup(x,z,v)\cup(y,z,x^2-vw)$ in $K \llbracket x, y, z, v, w \rrbracket$.

\begin{claim}
The socle of $R/I$ is not contained in $I^\ast/I$.
\end{claim}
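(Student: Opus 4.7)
The plan is to exhibit an explicit socle element of $R/I$ that does not lie in $I^\ast/I$, namely (the image of) $x$. From the isomorphism $R/I \cong K\llbracket X, Y, Z\rrbracket/(XY, XZ, YZ, Y^2, Z^2, X^2)$ established in Claim 3, the variables $X, Y, Z$ correspond to the images of $x, y, v$ in $R/I$ under the substitutions $w \equiv -v$, $z \equiv -y - v$ dictated by the generators of $I$. In this Artinian quotient the socle is the three-dimensional $K$-subspace spanned by $X, Y, Z$, so it suffices to prove that the image of $x$ in $R/I$ lies outside $I^\ast$.

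To establish $x \notin I^\ast$, I would reduce modulo the minimal prime $\p = (y, z, x^2 - vw)$ identified in Claim 1, for which $R/\p \cong K\llbracket x, v, w\rrbracket/(x^2 - vw)$ is a two-dimensional complete normal domain. Under this reduction $I$ maps to $(v + w,\, v) = (v, w)$ while $\bar x \ne 0$. Since $R$ is reduced, any tight-closure witness $c \in R$ for $x \in I^\ast$ avoids every minimal prime of $R$, so its image $\bar c \in R/\p$ is nonzero and $\bar c\, \bar x^q \in (v^q, w^q)$ in $R/\p$ for $q \gg 0$. It therefore suffices to show $\bar x \notin (v, w)^\ast$ inside the domain $R/\p$.

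This last step is the main computation. Since $\{1, x\}$ is a free $K\llbracket v, w\rrbracket$-basis of $R/\p$, every element can be written uniquely as $g(v, w) + x\, h(v, w)$; using $x^2 = vw$, the product $c\, x^q$ for $c = g + xh$ splits as a sum of a ``part in $K\llbracket v,w\rrbracket$'' and a ``part in $x\, K\llbracket v, w\rrbracket$'', each of which is $g$ or $h$ times a monomial $v^a w^a$ with $a \in \{(q-1)/2,\, q/2,\, (q+1)/2\}$ depending on the parity of $q$ (odd for $p$ odd, even for $p = 2$). Projecting the membership $c\, x^q \in (v^q, w^q) R/\p$ onto the two summands and using the elementary equivalence $g\, v^a w^a \in (v^q, w^q) \Leftrightarrow g \in (v^{q-a}, w^{q-a})$ in $K\llbracket v, w\rrbracket$ yields $g, h \in (v^{a(q)}, w^{a(q)})$ with $a(q) \to \infty$ as $q \to \infty$. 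Krull's intersection theorem then forces $g = h = 0$, contradicting $c \ne 0$. I expect the main obstacle to be careful bookkeeping of this two-summand decomposition uniformly in both characteristics, but no deeper $F$-singularity machinery is required since $R/\p$ is free over $K\llbracket v, w\rrbracket$.
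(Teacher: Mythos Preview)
Your proposal is correct. The overall architecture matches the paper's: exhibit a socle element of $R/I$ not in $I^\ast$ by reducing modulo the minimal prime $\p=(y,z,x^2-vw)$, where $I$ becomes $(v,w)$, and then show that $\bar x\notin((v,w))^\ast$ in $R/\p\cong K\llbracket x,v,w\rrbracket/(x^2-vw)$. The paper uses the element $x+y$ rather than $x$, but this is immaterial since $y\in\p$, so both map to $\bar x$.

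The genuine difference is in the final step. The paper dispatches $\bar x\notin((v,w))^\ast$ in one line by observing that $S=K\llbracket X,V,W\rrbracket/(X^2-VW)$ is strongly $F$-regular, so $(V,W)S$ is tightly closed and visibly does not contain $X$. You instead carry out a direct computation: using the free $K\llbracket v,w\rrbracket$-basis $\{1,x\}$ of $R/\p$, you decompose $c\,x^q$ componentwise and reduce to colon-ideal identities $((v^q,w^q):v^aw^a)=(v^{q-a},w^{q-a})$ in the regular ring $K\llbracket v,w\rrbracket$, then invoke Krull's intersection theorem. Your route is longer but entirely elementary and self-contained, avoiding any appeal to $F$-singularity theory; the paper's route is shorter but presupposes the reader knows (or accepts) that the $A_1$ singularity is strongly $F$-regular. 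Both are valid, and your bookkeeping of the odd/even cases for $q$ is accurate.
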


Since $R/I\cong K \llbracket X, Y, Z\rrbracket/(XY, XZ, YZ, Y^2, Z^2, X^2)$, it is enough to show that $\m\nsubseteq I^\ast$.
We prove $(x+y)R \nsubseteq I^\ast$.
Let $\p=(y,z,x^2-vw)R$ and $S=K \llbracket X,V,W \rrbracket/(X^2-VW)\cong R/\p$.
If $(x+y)R \subseteq I^\ast$, then we have $(x+y)(R/\p) \subseteq (I(R/\p))^\ast$ since $\p$ is a minimal prime ideal of $R$.
This means $XS\subseteq ((V,W)S)^\ast$.
Since $S$ is strongly $F$-regular, $(V,W)S$ is tightly closed and contains $XS$, a contradiction.
%If $(x+y)R \subseteq I^\ast$, there exists $c\in K \llbracket x, y, z, v, w \rrbracket$ such that $c\notin (x,y,w)\cup(x,z,v)\cup(y,z,x^2-vw)$ and $c(x+y)^q \in (xy, xz, yz, yv, zw, x^2-vw, (v+w)^q, (y+z+v)^q)$ for all large $e>0$ and $q=\new{p}^e$.
%Then $c\notin (y,z,x^2-vw)$ and $cx^q \in (y,z,x^2-vw, v^q, w^q)$.
%For $S:=K \llbracket x, y, z, v, w \rrbracket/(y,z,x^2-vw)$, $xS$ is contained in $((v,w)S)^\ast$.
%Since $S\cong K \llbracket X,V,W \rrbracket/(X^2-VW)$ is strongly $F$-regular, $(v,w)S$ is tightly closed and contains $xS$, a contradiction.
%Indeed, the natural surjection $S\to K \llbracket st, s^2 ,t^2\rrbracket$ is a homomorphism between integral domains of the same dimension (see Remark \ref{domain rem}), so it is an isomorphism.
%Since the inclusion map $T\to K \llbracket s,t\rrbracket$ splits and $K \llbracket s,t\rrbracket$ is regular, $T$ is strongly $F$-regular.
%; see \cite[Corollary 2.13]{HY2} for instance.

\begin{claim}
The parameter test ideal of $R$ is not an $F$-ideal.
\end{claim}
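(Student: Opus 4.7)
The plan is to argue by contradiction using the equivalent characterization of $F$-ideals in Lemma \ref{nonCM F-ideal equiv.}. Let $J$ denote the parameter test ideal of $R$ and $I:=(v+w,y+z+v)R$. The core observation is that $x$ (rather than the element $x+y$ of Claim 4) is already a socle element of $R/I$ that fails to lie in $I^{\ast}$; assuming $J$ is an $F$-ideal will then force $x\in I^{\ast}$, yielding the contradiction.

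First, I would establish the analogue of Claims 3 and 4 with $x+y$ replaced by $x$. The isomorphism $R/I\cong K\llbracket X,Y,Z\rrbracket/(XY,XZ,YZ,X^{2},Y^{2},Z^{2})$ used in Claim 3 (via $w=-v$, $v=-y-z$) makes the image of $x$ a socle element of $R/I$, so $\m x\subseteq I$. Since Claim 3 shows $R$ is not $F$-rational, $J\subsetneq R$, hence $J\subseteq\m$ and $Jx\subseteq I$. On the other hand, the proof of Claim 4 applied to $x$ in place of $x+y$ shows $x\notin I^{\ast}$: modulo the minimal prime $\p=(y,z,x^{2}-vw)R$ the ring becomes $S=K\llbracket x,v,w\rrbracket/(x^{2}-vw)$, the ideal $I$ becomes $(v,w)S$, which is tightly closed by the strong $F$-regularity of $S$, and $x\notin(v,w)S$.

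Next I would invoke the fact that the reduced complete local ring $R$ is excellent and thus admits a test element $c$, which belongs to $J$ and, not being in any minimal prime of the reduced ring $R$, is a non-zero-divisor. Now suppose for contradiction that $J$ is an $F$-ideal. Since $R$ is Cohen--Macaulay we have $(x_{1},x_{2})^{\lim}=(x_{1},x_{2})$ for every system of parameters, so Lemma \ref{nonCM F-ideal equiv.}(2) applied to $v+w,y+z+v$ with $z=x$ gives $Jx^{p}\subseteq I^{[p]}$; iterating the same lemma with the systems of parameters $(v+w)^{p^{e}},(y+z+v)^{p^{e}}$ yields $Jx^{p^{e}}\subseteq I^{[p^{e}]}$ for every $e\ge 0$. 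In particular $cx^{p^{e}}\in I^{[p^{e}]}$ for all $e$, so by the description of tight closure in Convention \ref{conv}(2), $x\in I^{\ast}$, contradicting the previous paragraph.

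The only subtlety I anticipate is verifying that the Claim 4 argument goes through verbatim with $x+y$ replaced by $x$; this reduces to noting that $y,z\in\p$, so the Claim 4 computation is really about the image of $x$ in $S$, making the substitution transparent. Everything else is a direct iteration of Lemma \ref{nonCM F-ideal equiv.} combined with the existence of test elements in a reduced complete local ring.
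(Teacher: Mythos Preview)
Your argument is correct. Both your proof and the paper's hinge on the same contradiction---a socle element of $R/I$ being forced into $I^\ast$---but the routes differ. The paper works in $\H_\m^2(R)$: it uses the identification $J=\Ann_R(0^\ast_{\H_\m^2(R)})$ from \cite[Proposition~4.4]{S2}, observes that $(0:_{\H_\m^2(R)}J)$ is $F$-stable and contains $0^\ast$, and then invokes \cite[Proposition~2.5]{S3} (or \cite[Proposition~2.11]{EH}) together with the existence of a test element to conclude $(0:_{\H_\m^2(R)}J)=0^\ast$; properness of $J$ then puts the whole socle of $\H_\m^2(R)$ inside $0^\ast$, contradicting Claim~4. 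You bypass the local cohomology packaging and the external citations entirely: starting from $Jx\subseteq I$, you iterate Lemma~\ref{nonCM F-ideal equiv.} (using Cohen--Macaulayness to erase the limit closures) to obtain $Jx^{p^e}\subseteq I^{[p^e]}$, and then a test element $c\in J$ witnesses $x\in I^\ast$ directly. Your approach is more elementary and self-contained within the paper; the paper's approach, by contrast, establishes the stronger structural fact $(0:_{\H_\m^2(R)}J)=0^\ast$, which explains conceptually why \emph{every} socle element would land in tight closure rather than just the particular $x$ you chose. Your observation that $x$ and $x+y$ have the same image modulo $\p$ (since $y\in\p$) is exactly right and makes the appeal to Claim~4 transparent.
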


We put $N=0_{\H_\m^2(R)}^\ast$.
The parameter test ideal $J$ of $R$ is equal to $\Ann_R N$ by \cite[Proposition 4.4(ii)]{S2} and Claim 2.
If $J$ is an $F$-ideal, the submodule $(0:_{\H_\m^2(R)}J)$ of $\H_\m^2(R)$ is $F$-stable and contains $N$.
Combining Claim 1, \cite[Proposition 2.5]{S3} (or \cite[Proposition 2.11]{EH}), and \cite[Theorem 6.1(a)]{HH}, we obtain $N=(0:_{\H_\m^2(R)}J)$.
By Claim 3, $J$ is a proper ideal and thus $N$ contains the socle of $\H_\m^2(R)$.
This is in contradiction with Claim 4.
(By the proof of Claim 4, $[\alpha+(v+w,y+z+v)R]\in \varinjlim_{n} R/((v+w)^n,(y+z+v)^n)R\cong \H_\m^2(R)$ belongs to the socle of $\H_\m^2(R)$ but does not belong to $N$, where $\alpha$ is the image of $x+y$ in $R$.)
\end{ex}

Note that the parameter test ideal and the conductor of the ring $(R,\m)$ in Example \ref{count. para. F-ideal CM} are not contained in the ideal generated by a system of parameters.
In fact, according to Macaulay2, the parameter test ideal of $R$ is $(x,y,z)R$.
If $(x,y,z)R\subseteq (f,g)R$ for some $f,g \in \m$, then the 3-dimensional vector space $(x,y,z)(R/\m^2)$ is contained in the vector space $(f,g)(R/\m^2)$ of dimension at most 2, which is a contradiction.

Next, we consider the existence of Cohen--Macaulay rings that give counterexamples to Questions \ref{ques 1} and \ref{ques 2}.
By Remark \ref{others remark}(5), such counterexamples are provided by rings of dimension greater than or equal to two if they exist.
We focused on rings with minimal multiplicity because the square of the maximal ideal of such rings is often contained in some ideal generated by a system of parameters; see \cite[Exercise 4.6.14]{BH}.
Therefore, it would be sufficient to compare the conductor and the parameter test ideal with the square of the maximal ideal.
In the following, we provide counterexamples of two-dimensional Cohen--Macaulay local rings with minimal multiplicity to \cite[Exercise 12.5]{HS} and \cite[Proposition 6.1]{S2}.

\begin{ex}\label{count. para. s.o.p}
Let $K$ be a field, $a,b$ positive integers, $R=K[x, y, z, w]/(x^az-y^2, x^aw^b-yz, w^by-z^2)$ a quotient of a polynomial ring over $K$, and $\m=(x, y, z, v, w)R$.
% 以下、～なことを順を追って証明する

\begin{claim2}
$R$ is a two-dimensional Cohen--Macaulay ring, and $\m^2=(x,w)\m\subseteq (x,w)R$.
\end{claim2}
The latter part is obvious.
Consider the $2\times 3$ matrix $H=\left(\begin{smallmatrix}
   x^a & y & z\\
   y & z & w^b
\end{smallmatrix}\right)$
that is similar to the Hankel matrix.
For $S=K[x, y, z, w]$, it is well-known that $R=S/I_2(H)$ is a Cohen--Macaulay ring of dimension 2. 
Alternatively, using \cite[Corollary 1]{BE}, it is easy to see that the complex
$$
0\to S^{\oplus 2} \xrightarrow{H^\perp} S^{\oplus 3} \xrightarrow{(w^by-z^2\ -x^aw^b+yz\ x^az-y^2)} S \to R\to 0
$$
is exact, and the claim is derived by applying the Auslander-Buchsbaum formula to the localization of the $S$-module $R$.

\begin{claim2}
$R$ is an integral domain if at least one of $a$ or $b$ is not divisible by 3.
In particular, in this case, $R_\m$ is an analytically unramified Cohen--Macaulay local domain.
\end{claim2}
By Claim 1, $x$ is a non-zero-divisor of $R$.
Set $T= K[X, Y, W]/(X^{2a}W^b-Y^3)$.
There are isomorphisms
\begin{align*}
R_x &\cong K[x, y, z, w,x^{-1}]/(z-y^2x^{-a}, x^aw^b-yz, w^by-z^2) \\
&\cong K[X, Y, W, X^{-1}]/(X^aW^b-Y^3X^{-a}, W^bY-Y^4X^{-2a}) \\
&= K[X, Y, W, X^{-1}]/(X^{2a}W^b-Y^3)\cong T_X.
\end{align*}
If at least one of $a$ or $b$ is not divisible by 3, $T$ is an integral domain by Remark \ref{domain rem}, and hence so are $T_X$, $R_x$, $R$, and $R_\m$.
Since $R$ is excellent, the local domain $R_\m$ is analytically unramified.

\begin{claim2}
The conductor $\mathfrak{C}$ of $R_\m$ is contained in the parameter ideal $(x,w)R_\m$ if $a,b\ge 3$.
In particular, the parameter test ideal of $R_\m$ is contained in $(x,w)R_\m$ if $R$ has positive characteristic.
\end{claim2}

The latter assertion follows from Remark \ref{conduc para test}.
We prove the former.
We denote the image of $f\in S$ in $R$ by $[f]$.
Let $\overline{R_\m}$ be the integral closure of $R_\m$ in the fraction field $R_{(0)}$.
Put $c={\rm min}\{n\in \mathbb{Z} \mid 2a\le 3n\}$ and $d={\rm min}\{n\in \mathbb{Z} \mid 2b\le 3n\}$.
The equalities
$$
\left(\frac{[x^aw^d]}{[z]}\right)^3 \hspace{-5pt}=\frac{[x^{3a}w^{3d}]}{[z^3]}=\frac{[x^{2a}w^{3d-b}y]}{[z^2]}=[x^{2a}w^{3d-2b}], 
\left(\frac{[x^cw^b]}{[y]}\right)^3 \hspace{-5pt}=\frac{[x^{3c}w^{3b}]}{[y^3]}=\frac{[x^{3c-a}w^{2b}z]}{[y^2]}=[x^{3c-2a}w^{2b}]
$$
hold in $R_{(0)}$.
So $[x^aw^d]/[z]$ and $[x^cw^b]/[y]$ belong to $\overline{R_\m}$.
Let $\alpha/\beta\in \mathfrak{C}$, where $\alpha\in R$ and $\beta\in R\setminus\m$.
We can write $\alpha=[A+Bx+Cy+Dz+Ew+F]$ for some $A,B,C,D,E \in K$ and $F\in(x,y,z,w)^2$.
As $\alpha/\beta\in \mathfrak{C}$, we obtain $(\alpha/\beta)([x^aw^d]/[z]) \in R_\m$, and hence $\gamma \alpha [x^aw^d] \in \beta[z] R\subseteq zR$ for some $\gamma\in R\setminus \m$.
One can write $\gamma=[G+H]$ for some $0\ne G\in K$ and $H\in (x,y,z,w)$.
This implies that
$$
(G+H)(A+Bx+Cy+Dz+Ew+F)x^aw^d \in (z, y^2, x^aw^b, w^by)
$$
in $S$.
Since $3(d-1)<2b$ and $b\ge 3$, we know $d<(2b+3)/3\le b$.
So, it is clear that the coefficients of $x^aw^d$, $x^{a+1}w^d$, and $x^ayw^d$ for any element of $(z, y^2, x^aw^b, w^by)$ must be zero.
Therefore $A=B=C=0$ as $G\ne 0$.
Due to symmetry, $(\alpha/\beta)([x^cw^b]/[y]) \in R_\m$ derives $D=E=0$.
We conclude that $\alpha/\beta=[F]/\beta\in (x,w)R_\m$ by Claim 1.
\end{ex}

We conclude this section with a few remarks.

\begin{rem}\label{others remark}
(1) The complete local ring in Example \ref{count. para. F-ideal CM} is isomorphic to the $(x_1,x_2,x_3,x_4,x_5)$-adic completion of the ring in \cite[Section 9]{Kat} when $K$ is the field of two elements.
In fact, a homomorphism $\phi:K\llbracket x_1,x_2,x_3,x_4,x_5\rrbracket \to K\llbracket x,y,z,v,w\rrbracket$ of $K$-algebra with $\phi(x_1)=x, \phi(x_2)=x+y, \phi(x_3)=z+v, \phi(x_4)=v, \phi(x_5)=w$ induces an isomorphism between these quotient rings.

(2) Consider an example of a Cohen--Macaulay complete local domain for Example \ref{count. para. s.o.p}.
Let $S=K \llbracket x, y, z, w \rrbracket/(x^az-y^2, x^aw^b-yz, w^by-z^2)$ be a quotient of a formal power series ring over a field $K$.
Claims 1 and 3 of Example \ref{count. para. s.o.p} hold for $S$ for the same reason.
However, a different argument from Claim 2 is required to know whether $S$ is an integral domain.
Suppose that at least one of $a$ or $b$ is not divisible by 3 and that $R$ is as in Example \ref{count. para. s.o.p}.
Let $n$ be either 1 or 2.
Since $R$ is Cohen--Macaulay, $x^n-w, x$ is an $R$-regular sequence.
We see that $R/(x^n-w)R\cong K[X,Y,Z]/(X^aZ-Y^2, X^{a+nb}-YZ, X^{nb}Y-Z^2)$ and that there is a natural surjection $\phi_n: R/(x^n-w)R \to K[t^3, t^{2a+nb}, t^{a+2nb}] \subseteq K[t]$.
A similar argument to the proof of Claim 2 shows that $(R/(x^n-w)R)_x\cong (K[X,Y]/(X^{2a+nb}-Y^3))_X$.
By Remark \ref{domain rem}, $R/(x^n-w)R$ is an integral domain for some $n\in\{1,2\}$.
Then, since $\phi_n$ is a surjection of one-dimensional domains, it is an isomorphism.
By completion, we obtain $S/(x^n-w)S \cong K\llbracket t^3, t^{2a+nb}, t^{a+2nb}\rrbracket\subseteq K\llbracket t\rrbracket$, which implies that $S/(x^n-w)S$ is an integral domain, and thus so is $S$; see \cite[Claim 2 of Remark 4.1]{KMN}.

(3) Conversely, in (2) and Example \ref{count. para. s.o.p}, $S$ and $R_\m$ are not integral domains if both $a$ and $b$ are divisible by 3.
Indeed, if $a=3m$ and $b=3n$ for some integers $m,n$, we have $(x^m z-w^n y)(x^{2m}w^{2n}+x^m z+w^n y)=w^{2n}(x^az-y^2)-x^{2m}(w^by-z^2)$.
It is easy to see that the images of $x^m z-w^n y$ and $x^{2m}w^{2n}+x^m z+w^n y$ in $S$ and $R_\m$ are non-zero, but their product is zero.
Moreover, it can be inferred by Macaulay2 that the conductor and the parameter test ideal of $S$ and $R_\m$ are not contained in any ideal generated by a system of parameters if $a\le 2$ or $b\le 2$.
For example, according to Macaulay2, the conductor of $R$ contains $(z,w^e y)R$ for some $0\le e<b$ when $a\le 2$.
Now we show that $(z,w^e y)S$ is not contained in any ideal generated by a system of parameters.
(The same holds for $R_\m$.)
Suppose that there are power series $f,g$ whose images in $S$ form a system of parameters, such that $(z,w^e y)S\subseteq (f,g)S$.
There exist some power series $p.q$ such that the images of $pf+qg$ and $z$ in $S$ are equal.
Since $z$ is not in $(x,y,z,w)^2$, neither is $pf+qg$, which means that $p$ or $q$ is a unit.
If $q$ is a unit, $(f,g)S=(f,z)S$ holds.
Hence we may assume that $f\in (x,y,w)A$ and $g=z$, where $A=K\llbracket x,y,w\rrbracket$.
Then we have $f\notin (x,y)A\cup (w,y)A$ as $f$ is $S/zS$-regular, and $(z,w^e y)S\subseteq (f,g)S$ implies $w^e y\in (f, y^2, x^aw^b, w^by)A$.
We can write $w^e y=\alpha f+\beta$ for some $\alpha\in A$ and $\beta\in (y^2, x^aw^b, w^by)A$.
The image of $\alpha f$ in $A/(y^2, x^aw^b, w^ey)A$ is zero and $(y^2, x^aw^b, w^ey)A=(y,x^a)A\cap (y^2, w^e y, w^b)A$ is a primary decomposition.
Since $f\notin (x,y)A\cup (w,y)A$, $f$ is $A/(y^2, x^aw^b, w^ey)A$-regular.
This deduces $\alpha\in (y^2, x^aw^b, w^ey)A$ and thus $w^e y \in w^ey(x,y,w)A+(y^2, x^aw^b, w^by)A$, which is a contradiction.

(4) The reason why the proof of \cite[Proposition 4.5]{S2} did not work is that it is not necessarily true that $(0:_{\H_\m^d(R)}J)=0_{\H_\m^d(R)}^\ast$ holds for the parameter test ideal $J$.
If $R$ is complete and quasi-Gorenstein, then $\H_\m^d(R)$ is the injective hull of the residue field of $R$ and $J$ is equal to $\Ann_R (0_{\H_\m^d(R)}^\ast)$.
% Ma先生いわくwell-known to expart
% quasi-Gorenstein+excellent+equidimensional とすれば「para test=test=\Ann_R (E(k))=\Ann_R (0_{\H_\m^d(R)}^\ast)が「proof of \cite[Prop 4.4(ii)]{S2}」「\cite[Prop 3.9]{RP}」「\cite[Lemma 5.5]{D}」「\cite[Remark 8.4]{HH2}」を組み合わせて証明可能
% Corollary \ref{Check}（およびCorollary \ref{nonCM cor}）は少なくともこれで十分
Thus the above equality holds.
This implies that $J$ is an $F$-ideal.
Moreover, when $R$ is reduced, $J$ is non-zero by \cite[Theorem 6.1(a)]{HH}.
Therefore, Smith proved in \cite{S2} that the parameter test ideal of a Gorenstein reduced complete local ring is not contained in any parameter ideal.

(5) Let $R$ be an analytically unramified local ring of dimension 1.
We see that the conductor $\mathfrak{C}$ is not contained in any parameter ideal.
In fact, let $K$ be the total ring of fraction, and $\overline{R}$ the integral closure of $R$.
If $\mathfrak{C}$ is contained in $cR$ for $c\in R$, then we have $1/c\in (R:_K cR)\subseteq (R:_K \mathfrak{C})=\overline{R}$ (see the proof of \cite[Corollary 2.6]{M} for the last equation).
One has the equality $\overline{R}=c\overline{R}$.
% 以下は上の等号の証明、Maitra J.alg 2022の系2.6の証明を参考にせよ
% conductorは(R:_K \overline{R})でもあり、Rと\overline{R}の共通のイデアルで最大のものなので\mathfrak{C}\mathfrak{C}Rにもなる
% We have $(R:_K \mathfrak{C})=(R:_K \mathfrak{C}R)\subseteq ((R:_K \overline{R}):_K \mathfrak{C})=(\mathfrak{C}:_K \mathfrak{C})$.
% We claim $(\mathfrak{C}:_K \mathfrak{C})=\overline{R}$. Write $\mathfrak{C}=Rc_1+\cdots Rc_n$. By assumption, $xc_i=\sum_{j=1}^n r_{ij}c_j$ for any $1\le i\le n$, which means $(xI-A) {}^t (c_1\cdots c_n)=0$. We have $\operatorname{det}(xI-A) \mathfrak{C}=0$. Now $\mathfrak{C}$ has a NZD (it is a unit in $K$; see Maitra J.alg (2022) 2.6の直前), $\operatorname{det}(xI-A)=0$ and thus $x$ is integral over $R$.
Since $R$ is analytically unramified, $\overline{R}$ is finitely generated over $R$.
By Nakayama's lemma, $c$ is a unit. 
In particular, by Remark \ref{conduc para test}, the parameter test ideal of a one-dimensional excellent reduced local ring is not contained in any parameter ideal, that is, \cite[Proposition 6.1]{S2} holds true.
\end{rem}

\section{Dualizing complex and standard conditions}

In this section, following the approach of Dey and Dutta \cite{DD}, we consider Questions \ref{ques 1} and \ref{ques 2} by studying the trace ideal of big Cohen--Macaulay modules.
While following the ideas of their proof, we aim to generalize some previous results by extending the argument to derived functors.
The notation below is used in this section.

\begin{conv}\label{def} 
Let $(R,\m)$ be a local ring of dimension $d$.

(1) Assume all complexes of $R$-modules are chain complex.
For a complex $X$ and an integer $m$, $\Sigma^{m}X$ denotes the complex $X$ shifted $m$ degrees, that is, $(\Sigma^{m}X)_i=X_{i-m}$.
We assume that the components of a dualizing complex of $R$ are non-zero only in the range $[0,-d]$ and that the components of the Koszul complex of a sequence $x_1,\ldots,x_n$ in $R$ are non-zero only in the range $[n,0]$.
An $R$-module $M$ is called a \textit{big Cohen--Macaulay module} if there is a system of parameters that is an $M$-regular sequence.
%A big Cohen--Macaulay $R$-module $M$ is balanced if every system of parameters of $R$ is an $M$-regular sequence.

(2) Let $\alpha:F\to G$ be a homomorphism of free $R$-modules.
The \textit{rank} of $\alpha$ is the largest integer $r$ such that the induced map $\bigwedge^r \alpha: \bigwedge^r F\to \bigwedge^r G$ is not zero, and is denoted by $\rank \alpha$.
We denote by $I_t(\alpha)$ the ideal generated by the $t$-minors of $\alpha$.
Then $\rank \alpha$ is the same as the largest integer $t$ such that $I_t(\alpha)\ne 0$.
%Let $J$ be the nilradical of $R$. 
%We write $R_{\red}=R/J$, $\alpha_{\red}=\alpha\otimes_R R_{\red}$, and $X_{\red}=X \otimes_R R_{\red}$ for an $R$-module or a complex of $R$-modules $X$.

(3) Let $G$ be a complex of finitely generated free $R$-modules
$$
0\to G_{n}\xrightarrow{\alpha_{n}} G_{n-1} \xrightarrow{\alpha_{n-1}} \cdots \xrightarrow{\alpha_{2}} G_{1} \xrightarrow{\alpha_{1}} G_{0}\to 0.
$$
Let $b_i$ be the rank of $G_i$ and $r_i=\sum_{j=i}^{n} (-1)^{j-i}b_j$ for each $1\le i\le n$.
We say that $G$ satisfies the \textit{standard condition on rank} if $\rank \alpha_i=r_i$ for any $1\le i\le n$ (equivalently, $b_i=\rank\alpha_i+\rank\alpha_{i+1}$ for any $1\le i\le n$), and that $G$ satisfies the \textit{standard condition for height} %(respectively, \textit{depth}) 
if the height %(respectively, depth) 
of the ideal $I_{r_i}(\alpha_i)$ is at least $i$ for each $1\le i\le n$.

(4) Assume that $R$ is of prime characteristic.
Let $G$ be a complex as in (3).
Following Aberbach \cite{Ab} and Hochster-Huneke \cite{HH2}, the $i$-th homology $H_i(G)$ is said to be \textit{phantom} if $\ker \alpha_i  \subseteq (\image \alpha_{i+1})^\ast_{G_i}$ holds.
We say that $G$ is \textit{phantom acyclic} if $H_i(G)$ is phantom for all $i>0$, and that it is \textit{stably phantom acyclic} if $F^e(G)$ is phantom acyclic for all $e>0$.
We also say that an $R$-module $M$ has \textit{finite phantom projective dimension} if there is a stably phantom acyclic complex $G$ such that $H_0(G)\cong M$.
If $R$ is reduced and $G$ is stably phantom acyclic, then $G$ satisfies the standard condition on rank and height; see \cite[Theorem 1.4.9]{Ab} or \cite[Theorem 9.8]{HH2}.

(5) For $R$-modules $M,N$, we set $\tr_N (M)=\sum_{f\in\Hom_R(M,N)} f(M)$.
Let $E$ be the injective hull of $R/\m$.
If $\H_\m^d(R)\cong E$, $R$ is said to be \textit{quasi-Gorenstein}.
A finitely generated $R$-module $\omega$ is called a \textit{canonical module} if $\Hom_R(\omega, E)\cong \H_\m^d(R)$.
We have $\omega\cong \Hom_R(\H_\m^d(R), E)$ when $R$ is complete, and $\omega\cong R$ when $R$ is complete and quasi-Gorenstein.
Additionally, let $R$ be a complete local domain of prime characteristic.
Applying the Matlis dual to the natural surjection $\H_\m^d(R)\to \H_\m^d(R)/0_{\H_\m^d(R)}^\ast$, $\Hom_R(\H_\m^d(R)/0_{\H_\m^d(R)}^\ast, E)$ can be regarded as a submodule of $\omega$.
It is called the \textit{parameter test submodule} and is denoted by $\tau (\omega)$.
Note that this consistent with the definition in \cite[Definition-Proposition 2.7]{MS}, and coincides with the parameter test ideal when $R$ is quasi-Gorenstein; see the proof of Corollary \ref{nonCM cor}.
%If $R$ is Gorenstein, $\tau (\omega)$ is the parameter test ideal of $R$.
\end{conv}

\begin{rem}\label{std cond and para or fpd}
Let $R$ be a local ring, $F$ a (minimal) free resolution of a finitely generated $R$-module with finite projective dimension, $\boldsymbol{x}=x_1,\ldots,x_s$ a sequence in $R$ such that $\height\boldsymbol{x}R=s$, and $K$ the Koszul complex $K$ of $\boldsymbol{x}$.
It is known (or easily shown) that $F$ and $K$ satisfy the standard conditions on rank and height.
Furthermore, if $R$ has prime characteristic, and $I$ is a proper ideal of $R$ that either has finite projective dimension or is a parameter ideal, then $R/I$ has finite phantom projective dimension.
However, none of those facts are required in this paper.
Of course, (3) and (4) of Proposition \ref{fpd lemma}, and (2) and (3) of Corollaries \ref{nonCM original} and \ref{nonCM cor}, can be omitted.
However, if limited to those cases, it can be seen that the proofs are more refined and compact.
(In fact, the above mentioned facts, Lemmas \ref{simple minimal lemma} and \ref{std cond lemma}, are not required.)
From this perspective,  this section treats them as independent conditions.
\end{rem}

In the following, we prepare some basic lemmas that will be used in this section.
Some of these are well-known facts, but here we provide proofs for the benefit of the reader.

\begin{lem}\label{dual cpx remark}
Let $R$ be a local ring with a dualizing complex $\dual$.
Then the following hold:
\begin{enumerate}[\rm(1)]
\item $R$ is catenary.
\item Let $\p$ be a prime ideal of $R$, and $D'$ a dualizing complex of $R_\p$.
Then there is a quasi-isomorphism $D_\p\simeq \Sigma^{-n}D'$, where $n=\dim R-\dim R/\p-\height\p$.
\end{enumerate}
\end{lem}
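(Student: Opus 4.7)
The plan is to build both parts simultaneously through the codimension function $\delta\colon\spec R\to\mathbb{Z}$ defined by letting $\delta(\p)$ record the unique chain degree at which $E(R/\p)$ appears in the minimal injective resolution of $D$. That such a unique degree exists is a standard structural fact about dualizing complexes: the Bass numbers $\mu^i(\p,D)$ vanish for all but one value of $i$, for which the value is $1$. The key lemma to establish is the identity $\delta(\q)=\delta(\p)+1$ for every saturated cover $\p\subsetneq\q$.

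Granting this identity, both conclusions follow formally. For (1), the length of any saturated chain of primes between two comparable primes equals the difference of $\delta$-values at the endpoints, which is independent of the chain, so $R$ is catenary. Iterating the shift-by-one together with the normalization $\delta(\m)=-\dim R$ (built into the assumption on $D$) and catenarity then yields the explicit formula $\delta(\p)=\dim R/\p-\dim R$ for every prime $\p$. For (2), I would first observe that $D_\p$ is itself a dualizing complex of $R_\p$: every defining property (boundedness, finite generation of cohomology, finite injective dimension, and the homothety morphism $R\to\RHom_R(D,D)$ being a quasi-isomorphism) localizes. Any two dualizing complexes over a local ring differ only by a shift---the Picard group is trivial, so no twist by an invertible module intervenes---giving $D_\p\simeq\Sigma^m D'$ for some $m\in\mathbb{Z}$. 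Comparing the chain degree of $E_{R_\p}(\kappa(\p))$ in each complex---namely $-\height\p$ in the normalized $D'$ and $\delta(\p)=\dim R/\p-\dim R$ in $D_\p$ (Bass numbers localize)---then yields $m=\dim R/\p-\dim R+\height\p=-n$.

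The main obstacle is the shift-by-one identity at saturated covers, which is the technical core of everything else. I would handle it by localizing at $\q$ and passing to the quotient ring $R_\q/\p R_\q$, which is a one-dimensional Noetherian local ring. Its dualizing complex is produced from $D_\q$ via $\RHom_{R_\q}(R_\q/\p R_\q,-)$, and in dimension one the minimal injective resolution of a normalized dualizing complex places $E(R/\m)$ exactly one chain degree below each $E(R/\q_0)$ for minimal primes $\q_0$. This concrete one-dimensional calculation, combined with the localization stability used above, closes the argument.
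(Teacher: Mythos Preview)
Your argument is correct and follows the classical intrinsic route via the codimension function associated to a dualizing complex (as in Hartshorne's \emph{Residues and Duality} or Roberts' treatment), whereas the paper takes an extrinsic shortcut: it invokes Kawasaki's theorem to write $R$ as a quotient of a Gorenstein local ring $S$ with $\dim S=\dim R$, so that $D\simeq\RHom_S(R,S)$, and then computes the shift in (2) by choosing an $S_\q$-regular sequence of length $n$ inside the defining ideal and applying adjunction through the intermediate Gorenstein quotient. Your approach has the advantage of being self-contained---it does not require the deep converse direction of Kawasaki's theorem (that a ring with a dualizing complex is a homomorphic image of a Gorenstein ring)---and it makes the catenarity in (1) and the shift formula in (2) visibly two consequences of the same structural fact about Bass numbers. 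The paper's approach, on the other hand, is more explicit and dovetails with later arguments (Lemma~\ref{bigCM lemma}) that again exploit a Gorenstein presentation, so in context it is economical. One small point worth tightening in your write-up: the normalization $\delta(\m)=-\dim R$ is not literally the hypothesis ``components in degrees $[0,-d]$'' but rather a consequence of it together with the fact that $D\otimes_R^{\mathbf L}C\simeq\Sigma^{-d}E$ for the \v Cech complex $C$; you may want to say a word about why the convention forces this.
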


\begin{proof}
Since $R$ admits a dualizing complex, it is a homomorphic image of a Gorenstein local ring $S$ such that $\dim S=\dim R$; see \cite[Corollary 1.4]{Kaw}.
Hence (1) holds.
Then $D\simeq\RHom_S(R,S)$.
Put $R=S/I$.
Let $\q$ be the inverse image of $\p$ in $S$.
Since $S$ is Cohen--Macaulay, we have the following equalities:
$$
\grade(IS_\q,S_\q)=\height IS_\q=\dim S_\q-\dim R_\p=\dim S-\dim S/\q-\dim R_\p=n. 
$$
We can choose an $S_\q$-regular sequence $x_1,\ldots,x_n$ in $I$ and set $A=S_\q/(x_1,\ldots,x_n)S_\q$.
We know $\Sigma^{-n} A\simeq\RHom_{S_\q}(A,S_\q)$.
Since $R_\p$ is a homomorphic image of $A$, which is a Gorenstein local ring such that $\dim A=\dim S_\q-n=\dim R_\p$, we obtain $D'\simeq\RHom_A(R_\p,A)$.
There are quasi-isomorphisms
$$
\RHom_{S_\q}(R_\p,S_\q)\simeq\RHom_A(R_\p, \RHom_{S_\q}(A,S_\q))\simeq\RHom_A(R_\p, \Sigma^{-n} A)\simeq\Sigma^{-n}\RHom_A(R_\p,A)
$$
which induce $D_\p\simeq \Sigma^{-n}D'$.
\end{proof}

\begin{lem}\label{simple minimal lemma}
Let $R$ be a local ring, and $F=(0\to F_{n}\to \cdots \to F_{0}\to 0)$ a complex of finitely generated free $R$-modules.
Then there exists a complex of finitely generated free $R$-modules $H=(0\to H_{n}\to \cdots \to H_{0}\to 0)$ such that $F\simeq H$ and the minimal number of generators of $H_0(F)$ is $\rank H_0$.
\end{lem}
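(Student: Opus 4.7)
The plan is to iteratively split off a trivial rank-one subcomplex concentrated in degrees $0$ and $1$ until the image of the first differential lies in $\m$ times the degree-zero component; the resulting complex $H$ will then satisfy $\rank H_0 = \mu(H_0(H)) = \mu(H_0(F))$, where $\mu(-)$ denotes the minimal number of generators. Let $\mu := \mu(H_0(F))$, write $\partial_i$ for the differentials of $F$, and assume $\rank F_0 > \mu$. From the identity $\mu = \rank F_0 - \dim_{R/\m} \image(\partial_1 \otimes_R R/\m)$, one sees that $\partial_1 \otimes_R R/\m \ne 0$, so the matrix of $\partial_1\colon F_1 \to F_0$ has a unit entry in some pair of bases.

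I perform elementary row and column operations---equivalently, changes of basis of $F_0$ and $F_1$---to bring $\partial_1$ into the block form $\mathrm{diag}(1,\partial_1')$ relative to decompositions $F_0 = R \oplus F_0''$ and $F_1 = R \oplus F_1''$. The differentials $\partial_i$ for $i \ge 3$ are unaffected, while $\partial_2$ is replaced by its expression in the new basis of $F_1$. The relation $\partial_1\partial_2 = 0$ together with the block form of $\partial_1$ forces the first coordinate of the new $\partial_2$ to vanish, so $\partial_2$ maps $F_2$ into $F_1''$. Consequently $F$ decomposes as a direct sum of complexes $F = T \oplus F'$, where $T = (0 \to R \xrightarrow{1} R \to 0)$ in degrees $1$ and $0$ is contractible and acyclic, and $F'$ satisfies $\rank F_0' = \rank F_0 - 1$, $\rank F_1' = \rank F_1 - 1$, $F'_i = F_i$ for $i \ge 2$, and $H_0(F') \cong H_0(F)$.

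Since $T$ is acyclic, $F \simeq F'$, and one iterates the argument on $F'$. The process terminates exactly when the degree-zero rank equals $\mu$---equivalently, when the first differential has no unit entry---and the resulting complex is the desired $H$; its length stays at most $n$ throughout since higher-degree components are never enlarged. The only substantive point, rather than a serious obstacle, is verifying that after the basis change on $F_1$ the new $\partial_2$ indeed lands in $F_1''$, which follows cleanly from $\partial_1\partial_2 = 0$; the remainder is elementary linear algebra and induction on $\rank F_0$.
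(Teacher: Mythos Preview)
Your proof is correct and takes essentially the same approach as the paper's: both split off a trivial subcomplex from degrees $0$ and $1$ via elementary row and column operations on the matrix of $\partial_1$, using that this matrix has unit entries because $\partial_1\otimes_R R/\m\ne 0$. The only cosmetic difference is that the paper observes $I_{b-\mu}(\partial_1)=R$ and splits off the entire rank-$(b-\mu)$ identity block in a single step, whereas you peel off one rank-one summand at a time and iterate; the underlying argument is identical.
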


\begin{proof}
The case $n=0$ is trivial. Assume $n>0$.
Let $\mu$ the minimal number of generators of $H_0(F)$.
Applying $(-)\otimes_R k$ to the exact sequence $F_1\xrightarrow{\alpha} F_0\to H_0(F)\to 0$, we obtain an exact sequence $k^{\oplus a}\xrightarrow{\alpha\otimes k} k^{\oplus b}\to k^{\oplus \mu}\to 0$, where $k$ is the residue field of $R$, $a=\rank F_1$, and $b=\rank F_0$.
The dimension of $\image(\alpha\otimes k)$ as a $k$-vector space is $b-\mu$, so $I_{b-\mu}(\alpha)=R$.
By elementary row and column operations, we obtain the following commutative diagram:
\[
  \xymatrix@C=40pt@R=20pt{
    F_1 \ar[r]^{\alpha} \ar[d]_{\rotatebox{90}{$\cong$}}%^{\phi} 
    & F_0 \ar[d]_{\rotatebox{90}{$\cong$}}  \\
    R^{\oplus(b-\mu)} \oplus R^{\oplus(a-b+\mu)} \ar[r]^{\quad \begin{pmatrix}
   \operatorname{id} & 0 \\
   0 & \beta
\end{pmatrix}}
    & R^{\oplus(b-\mu)} \oplus R^{\oplus\mu}, \\
  }
\]
 which induces a short exact sequence 
\[
  \xymatrix@C=15pt@R=10pt
  {
    0 \simeq (0 \ar[r] 
    & 0 \ar[r] \ar[d]
    & \cdots \ar[r]
    & 0 \ar[r] \ar[d]
    & R^{\oplus(b-\mu)} \ar@{=}[r] \ar[d]
    & R^{\oplus(b-\mu)} \ar[r] \ar[d]
    & 0 )\\
    F = (0 \ar[r]
    & F_n \ar[r]  \ar@{=}[d]
    & \cdots \ar[r]
    & F_2 \ar[r]  \ar@{=}[d]
    & F_1 \ar[r]^{\alpha}    \ar[d]
    & F_0 \ar[r]  \ar[d]
    & 0 )\\
    H := (0 \ar[r]
    & F_n \ar[r] 
    & \cdots \ar[r]
    & F_2 \ar[r]
    & R^{\oplus(a-b+\mu)} \ar[r]^{\ \beta}
    & R^{\oplus\mu} \ar[r] 
    & 0 )\\
  }
\]
of complexes of finitely generated free $R$-modules.
Then $F\simeq H$.
The proof is now completed.
\end{proof}

\begin{lem}\label{std cond lemma}
Let $R$ be a local ring of dimension $d$, and $G$ a complex of finitely generated free $R$-modules satisfying the standard conditions on rank and height.
\begin{enumerate}[\rm(1)]
\item There exists a complex of finitely generated free $R$-modules $H$ satisfying the standard conditions on rank and height such that $G\simeq H$, $H_i=0$ for any $i>d$.
\item  For any prime ideal $\p$ of $R$, $G_\p$ satisfies the standard conditions on rank and height.
%\item \old{$G_{\red}$ satisfies the standard conditions on rank and height.}
\end{enumerate}
\end{lem}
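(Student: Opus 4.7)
The strategy is to prove (1) by inductively removing contractible summands concentrated at the top of $G$, and to prove (2) by showing that the standard conditions prevent the ideals of minors from collapsing in any localization.

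For (1), if $n \leq d$ we may take $H = G$. Otherwise, since $r_n = b_n$ at the top, the standard condition on height gives $\height I_{b_n}(\alpha_n) \geq n > d$; but every proper ideal of $R$ has height at most $d$, so $I_{b_n}(\alpha_n) = R$. Some $b_n\times b_n$ minor of $\alpha_n$ is therefore a unit, and after elementary row and column operations, $\alpha_n$ becomes the inclusion of the first $b_n$ coordinates $R^{b_n} \hookrightarrow R^{b_n} \oplus R^{b_{n-1}-b_n} \cong G_{n-1}$. The identity map $G_n \xrightarrow{=} \image \alpha_n$ then constitutes a contractible direct summand of $G$ in degrees $n,n-1$, and the remaining summand $G'$ has length $n-1$ with $G'_{n-1}=R^{b_{n-1}-b_n}$ and inherited differentials (the relation $\alpha_{n-1}\alpha_n = 0$ ensures that $\alpha_{n-1}$ vanishes on $\image \alpha_n$). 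A direct computation yields $r'_i = r_i$ and $I_{r'_i}(\alpha'_i) = I_{r_i}(\alpha_i)$ for all $1\leq i\leq n-1$, so $G'$ still satisfies the standard conditions on rank and height. Iterating gives a complex of length at most $d$ quasi-isomorphic to $G$.

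For (2), the values $b_i$, and hence the numbers $r_i$, are unchanged under localization. The inequality $\rank \alpha_{i,\p} \leq r_i$ is immediate from $I_{r_i+1}(\alpha_i) R_\p = 0$; for the reverse inequality, it suffices to show $I_{r_i}(\alpha_i) R_\p \neq 0$. If it vanished, some $s \in R \setminus \p$ would annihilate $I_{r_i}(\alpha_i)$. Since $\p$ contains the nilradical, $s$ is non-nilpotent, so there exists a minimal prime $\q$ of $R$ with $s \notin \q$. Working in the domain $R/\q$, the equation $s \cdot I_{r_i}(\alpha_i) = 0$ forces $I_{r_i}(\alpha_i) \subseteq \q$, giving $\height I_{r_i}(\alpha_i) \leq \height \q = 0$ and contradicting the standard condition at level $i \geq 1$. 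The height condition for $G_\p$ is then automatic, since $\height_{R_\p} I_{r_i}(\alpha_i) R_\p$ is the minimum of $\height \q$ over primes $\q \subseteq \p$ containing $I_{r_i}(\alpha_i)$, which ranges over a subset of the primes controlling $\height_R I_{r_i}(\alpha_i)$, hence is at least $\height_R I_{r_i}(\alpha_i) \geq i$.

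The only genuinely delicate point is the non-vanishing $I_{r_i}(\alpha_i) R_\p \neq 0$ in (2): the standard condition on height (not merely the rank condition) is used here in an essential way, through the choice of a minimal prime disjoint from the hypothetical annihilator. The arithmetic of (1) is essentially linear algebra, and the verification that the inherited standard conditions are preserved reduces to the observation that $\alpha_{n-1}\alpha_n=0$ forces $\alpha_{n-1}$ to act trivially on $\image \alpha_n$, so no nonzero minors of $\alpha_{n-1}$ are lost when restricting to the complement.
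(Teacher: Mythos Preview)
Your proof is correct and follows essentially the same route as the paper's. For (1), both arguments observe that $I_{b_n}(\alpha_n)=R$ forces $\alpha_n$ to be split injective and then pass to the quotient complex; the only cosmetic difference is that the paper invokes Fitting invariants to obtain $I_t(\alpha_{n-1})=I_t(\beta)$ for the induced map $\beta$, whereas you reach the same conclusion via the explicit block form $\alpha_{n-1}=[\,0\mid\alpha'_{n-1}\,]$. For (2), the paper compresses your minimal-prime argument into the single line ``$\height I_{r_i}(\alpha_i)R_\p\ge\height I_{r_i}(\alpha_i)\ge i>0$, hence $I_{r_i}((\alpha_i)_\p)\ne 0$''; your more explicit treatment of the non-vanishing is a faithful unpacking of that inequality.
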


\begin{proof}
We write $G=(0\to G_{n}\xrightarrow{\alpha_{n}} G_{n-1} \xrightarrow{\alpha_{n-1}} \cdots \xrightarrow{\alpha_{2}} G_{1} \xrightarrow{\alpha_{1}} G_{0}\to 0)$.
Let $b_i$ be the rank of $G_i$ and $r_i=\sum_{j=i}^{n} (-1)^{j-i}b_j$ for each $1\le i\le n$.

(1) Perform the following operation if $n>d$.
By assumption, $\height I_{r_n}(\alpha_{n})\ge n>d$ and $\rank \alpha_n=r_n=b_n$.
By this, we get $I_{b_n}(\alpha_n)=R$ and $I_{b_n+1}(\alpha_n)=0$.
It follows from \cite[Proposition 1.4.10]{BH} that $\cok(\alpha_n)$ is a free $R$-module of rank $b_{n-1}-b_n$ and hence $\image(\alpha_n)$ is a free $R$-module of rank $b_n$.
The natural surjection $G_n\to \image(\alpha_n)$ is an isomorphism.
There is a short exact sequence 
\[
  \xymatrix@C=15pt@R=10pt
  {
    0 \simeq (0 \ar[r]
    &  G_n \ar[r] \ar@{=}[d]
    & \image(\alpha_n) \ar[r] \ar[d]    
    & 0 \ar[r] \ar[d]
    & \cdots \ar[r]
    & 0 \ar[r] \ar[d]
    & 0 \ar[r] \ar[d]
    & 0 )\\
    G = (0 \ar[r]
    & G_n \ar[r] \ar[d]
    & G_{n-1} \ar[r] \ar[d]
    & G_{n-2} \ar[r] \ar@{=}[d]
    & \cdots \ar[r]
    & G_1 \ar[r] \ar@{=}[d]
    & G_0 \ar[r] \ar@{=}[d]
    & 0 )\\
    H := (0 \ar[r]
    & 0 \ar[r] 
    & \cok(\alpha_n) \ar[r]^{\beta}
    & G_{n-2} \ar[r]
    & \cdots \ar[r]
    & G_1 \ar[r]
    & G_0  \ar[r] 
    & 0 )\\
  }
\]
of complexes of finitely generated free $R$-modules.
Then $G\simeq H$.
Since $\alpha_{n-1}$ and $\beta$ have the same image, they also have the same cokernel.
Hence $I_t(\alpha_{n-1})=I_t(\beta)$ for every integer $t$ because these are Fitting invariants of the same module.
We obtain $\rank \alpha_{n-1}=\rank \beta$.
It is easy to see that $H$ satisfies the standard conditions on rank and height.

(2) Fix $1\le i\le n$ and $\p\in\spec(R)$.
We have $\height I_{r_i}(\alpha_i)\ge i$ and $I_{r_i+1}(\alpha_i)=0$ by assumption.
Then $\height I_{r_i}((\alpha_i)_\p)=\height I_{r_i}(\alpha_i)R_\p\ge \height I_{r_i}(\alpha_i)\ge i$ and hence $I_{r_i}((\alpha_i)_\p)\ne 0$ as $i>0$.
On the other hand, $I_{r_i+1}((\alpha_i)_\p)=\height I_{r_i+1}(\alpha_i)R_\p=0$, which means $\rank (\alpha_i)_\p=r_i$.
We see that $G_\p$ satisfies the standard conditions on rank and height.
%\old{(3) Fix $-n\le i\le -1$. Since $\spec(R)=\spec(R_{\red})$, we have $\height I_{r^i}(\alpha^i_{\red})=\height I_{r^i}(\alpha^i)R_{\red}=\height I_{r^i}(\alpha^i)\ge -i$ and $I_{r^i+1}(\alpha^i_{\red})=I_{r^i+1}(\alpha^i)R_{\red}=0$, and hence $\rank\alpha^i_{\red}=r^i$. We conclude that $G_{\red}$ satisfies the standard conditions on rank and height.}
\end{proof}

The following proposition plays an essential role in the proof of the main result of this section.
This is a generalization of the Tor-orthogonality between maximal Cohen--Macaulay modules and modules of finite projective dimension in Cohen--Macaulay rings; see \cite[Theorem 5.3.10]{C} and Remark \ref{MCMcpx fpd}.

\begin{prop}\label{fpd lemma}
Let $R$ be a local ring with a dualizing complex $\dual$, and $N$ a finitely generated $R$-module.
\begin{enumerate}[\rm(1)]
\item Let $X$ be a complex of $R$-modules.
Suppose that for any prime ideal $\p$ of $R$, $X_\p$ is a quasi-isomorphic to a complex $P$ of finitely generated free $R_\p$-modules such that $P_i=0$ for any $i>\height\p$.
Then for any $i>0$, $H_i(X\otimes_R^\L \RHom_R(N,\dual))=0$ holds.
In particular, $H_i(X\otimes_R^\L\dual)=0$ for every $i>0$.
\item If $G$ is a complex of finitely generated free $R$-modules satisfying the standard conditions on rank and height, $H_i(G\otimes_R^\L \RHom_R(N,\dual))=0$ for any $i>0$.
In particular, $H_i(G\otimes_R^\L\dual)=0$ for every $i>0$.
\item Suppose that $M$ is a finitely generated $R$-module of finite projective dimension.
Then for any $i>0$, $H_i(M\otimes_R^\L \RHom_R(N,\dual))=0$ holds.
In particular, $H_i(M\otimes_R^\L\dual)=0$ for every $i>0$.
\item Let $\boldsymbol{x}=x_1,\ldots, x_s$ be a sequence in $R$ such that $\height  \boldsymbol{x}R=s$, and $K$ the Koszul complex of $\boldsymbol{x}$.
Then for any $i>0$, $H_i(K\otimes_R^\L \RHom_R(N,\dual))=0$ holds.
In particular, $H_i(K\otimes_R^\L\dual)=0$ for every $i>0$.
%\item \old{Let $X$ be a complex of $R$-modules. If $H^i(X\otimes_R^\L\RHom_R(N,\dual))\ne 0$ for some $i<0$, then there exists a prime ideal $\p$ of $R$ which is a minimal element of $\bigcup_{i<n} \supp(H^i(X\otimes_R^\L\RHom_R(N,\dual)))$, where $n=\dim R-\dim R/\p-\height\p$.}
\end{enumerate}
\end{prop}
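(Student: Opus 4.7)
The plan is to prove~(1) first as the core technical result and then deduce (2), (3), and (4) by verifying that $X$ satisfies the hypothesis of~(1) in each case. For~(1), the vanishing $H_i(X\otimes_R^\L \RHom_R(N,\dual))=0$ for $i>0$ can be checked locally at each prime $\p$. After localization, the hypothesis on $X$ gives that $X_\p$ is perfect over $R_\p$ (quasi-isomorphic to a bounded complex of finitely generated free modules in chain degrees $[0,\height\p]$). Perfectness allows the standard tensor--Hom swap
\[
X_\p\otimes_{R_\p}^\L \RHom_{R_\p}(N_\p,\dual_\p) \simeq \RHom_{R_\p}\!\bigl(N_\p,\,X_\p\otimes_{R_\p}^\L \dual_\p\bigr).
\]
Setting $W:=X_\p\otimes_{R_\p}^\L \dual_\p$, once we know $W$ lies in non-negative cohomological degrees, the Ext spectral sequence $E_2^{p,q}=\Ext^p_{R_\p}(N_\p,H^q(W))\Rightarrow \Ext^{p+q}_{R_\p}(N_\p,W)$ is supported in $p,q\ge 0$, yielding $\Ext^{-i}(N_\p,W)=0$ for $i>0$. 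The main body of~(1) thus reduces to its ``in particular'' clause $H_i(X\otimes_R^\L \dual)=0$ for $i>0$.

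For the ``in particular'' clause, localize at $\p$ and apply Lemma~\ref{dual cpx remark}(2) to write $\dual_\p\simeq \Sigma^{-n}\dual'$ for a normalized dualizing complex $\dual'$ of $R_\p$ (with components in chain degrees $[-\height\p,0]$) and $n=\dim R-\dim R/\p-\height\p\ge 0$. Since $n\ge 0$, it suffices to show $H_l(X_\p\otimes_{R_\p}^\L \dual')=0$ for $l>0$. Here I would invoke perfectness once more through the duality identity
\[
X_\p\otimes_{R_\p}^\L \dual'\simeq \RHom_{R_\p}\!\bigl(X_\p^\vee,\dual'\bigr),\qquad X_\p^\vee:=\RHom_{R_\p}(X_\p,R_\p).
\]
Crucially, $X_\p^\vee$ is represented by the termwise dual of a bounded free complex in chain $[0,\height\p]$, hence lives in non-negative cohomological degrees, and the normalized $\dual'$ also lies in non-negative cohomological degrees. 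The Ext spectral sequence $E_2^{p,q}=\Ext^p_{R_\p}(H^q(X_\p^\vee),\dual')\Rightarrow\Ext^{p+q}_{R_\p}(X_\p^\vee,\dual')$ is therefore supported in $p,q\ge 0$, giving $\Ext^i(X_\p^\vee,\dual')=0$ for $i<0$; this translates back to the required vanishing $H_l(X_\p\otimes_{R_\p}^\L \dual')=0$ for $l>0$.

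For parts (2)--(4), I would verify the hypothesis of~(1) in each case. For~(2), Lemma~\ref{std cond lemma}(2) transports the standard conditions to $G_\p$, and part~(1) of that lemma (applied to $G_\p$ over $R_\p$, which has dimension $\height\p$) replaces $G_\p$ by a quasi-isomorphic bounded complex of finitely generated free $R_\p$-modules in degrees $[0,\height\p]$. For~(3), a finite free resolution $F$ of $M$ localizes to a free resolution $F_\p$ of $M_\p$ of length $\pd_{R_\p}M_\p\le \operatorname{depth} R_\p\le \height\p$ by the Auslander--Buchsbaum formula. For~(4), the Koszul complex $K$ of $\boldsymbol{x}=x_1,\ldots,x_s$ with $\height(\boldsymbol{x}R)=s$ is a bounded free complex in $[0,s]$; at primes $\p\not\supseteq\boldsymbol{x}$ some $x_i$ is a unit in $R_\p$ so $K_\p$ is contractible, while $\p\supseteq\boldsymbol{x}$ forces $s\le\height\p$.

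I expect the main obstacle to lie in the ``in particular'' clause. A direct attack via the hyperTor spectral sequence $E^2_{p,q}=H_p(X_\p\otimes_{R_\p}^\L H_q(\dual'))$ does not obviously vanish in the required range once $R_\p$ is not Cohen--Macaulay, because $\dual'$ can have homology spread across the whole interval $[-\height\p,0]$ and the support bound $\dim\supp H_q(\dual')\le -q$ is too weak to kill contributions at the maximal ideal of $R_\p$ itself. The duality identity $X_\p\otimes^\L\dual'\simeq \RHom(X_\p^\vee,\dual')$ sidesteps this: together with the normalization of $\dual'$ into non-negative cohomological degrees, it reduces the computation to a formal Ext spectral sequence argument in which positivity of both arguments gives the vanishing for free.
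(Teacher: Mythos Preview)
Your reduction of the main statement of (1) to its ``in particular'' clause is correct: the tensor--Hom swap $X_\p\otimes^\L\RHom(N_\p,\dual_\p)\simeq\RHom(N_\p,X_\p\otimes^\L\dual_\p)$ holds for perfect $X_\p$, and the spectral sequence $\Ext^p(N_\p,H^q(W))\Rightarrow\Ext^{p+q}(N_\p,W)$ (filtering by cohomology of the \emph{second} variable) is correctly indexed. Your deductions of (2)--(4) from (1) are also fine.

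The gap is in your proof of the ``in particular'' clause. The spectral sequence you write as $\Ext^p(H^q(X_\p^\vee),\dual')\Rightarrow\Ext^{p+q}(X_\p^\vee,\dual')$ is mis-indexed: when one filters by cohomology of the \emph{first} variable, the abutment is $\Ext^{p-q}$, not $\Ext^{p+q}$ (equivalently, the standard form is $\Ext^p(H^{-q}(A),B)\Rightarrow\Ext^{p+q}(A,B)$). With the correct sign the $E_2$-page lives in $p\ge 0$, $q\le 0$, and gives no lower bound on the total degree of the abutment.

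This is not a mere bookkeeping slip: the local statement you are trying to prove is actually false. Fix any prime $\p$ with $\height\p\ge 1$ and take $X_\p=R_\p$ placed in chain degree~$1$. This is perfect with terms in chain degrees $[0,\height\p]$, so $X_\p^\vee$ sits in cohomological degree~$1$ and your hypotheses are met; yet $X_\p\otimes^\L\dual'=\Sigma^{1}\dual'$ has $H_1=H_0(\dual')=\omega_{R_\p}\ne 0$. Globally, the complex $\Sigma^1 R$ fails the hypothesis of (1) at minimal primes, but your argument at $\p$ invokes only the amplitude bound at $\p$ itself and would, if valid, establish this false local claim. The vanishing in (1) genuinely requires the amplitude bounds at \emph{all} primes contained in $\p$, and cannot be read off one prime at a time from positivity alone.

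The paper's proof uses a different mechanism. First, under the extra assumption that $H_i(X\otimes^\L\RHom(N,\dual))$ has finite length for every $i>0$, one may tensor with the \v{C}ech complex $C$ on a system of parameters without changing these homologies; since $\dual\otimes^\L C\simeq\Sigma^{-d}E$ collapses to a single module in chain degree $-d$, the bound $P_i=0$ for $i>d$ immediately kills $H_i(P\otimes\Sigma^{-d}\Hom(N,E))$ for $i>0$. The general case is then reduced to this one by a Noetherian-induction argument: one chooses a minimal prime $\p$ in the union of the supports of the offending homologies (with a descending-chain argument to handle the shift $n$ from Lemma~\ref{dual cpx remark}(2)), so that after localization those homologies over $R_\p$ have finite length and the first case applies. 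In particular, the general and ``in particular'' statements are proved simultaneously, rather than the latter being used to bootstrap the former.
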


\begin{proof}
(1) First, we consider the case where $H_i(X\otimes_R^\L\RHom_R(N,\dual))$ has finite length for any $i>0$.
Then we see that $H_i(X\otimes_R^\L\RHom_R(N,\dual))\cong H_i(X\otimes_R^\L\RHom_R(N,\dual)\otimes_R^\L C)$ for any $i>0$ where $C$ is the \v{C}ech complex on a system of parameters of $R$.
Let $E$ be the injective hull of the residue field of $R$ and $d=\dim R$.
Since $\dual\otimes_R^\L C\simeq\Sigma^{-d} E$, we get $\RHom_R(N,\dual)\otimes_R^\L C\simeq\RHom_R(N,\dual\otimes_R^\L C)\simeq\Sigma^{-d}\Hom_R(N,E)$.
Now $X$ is a quasi-isomorphic to a complex $P$ of finitely generated free $R$-modules such that $P_i=0$ for any $i>d$.
We have $H_i(X\otimes_R^\L\RHom_R(N,\dual))\cong H_i(P\otimes_R \Sigma^{-d}\Hom_R(N,E))=0$ for any $i>0$.

Next, we address the general case.
We put $V_n=\bigcup_{i>-n} \supp(H_i(X\otimes_R^\L\RHom_R(N,\dual)))$ for each integer $n$.
Suppose $V_0\ne \emptyset$.
We take a minimal element $\p_1$ of $V_0$ and put $n_1=\dim R-\dim R/\p_1-\height\p_1$.
Since $n_1\ge 0$, $\p_1$ belongs to $V_{n_1}$.
Repeat the following step.
If $\p_j$ is not minimal in $V_{n_j}$, we choose a minimal element $\p_{j+1}$ of $V_{n_j}$, which is contained in $\p_j$.
The equality $\dim R/\p_{j+1}=\dim R/\p_j+\height(\p_j/\p_{j+1})$ holds since $R$ is catenary and local; see Lemma \ref{dual cpx remark}(1).
We have 
$$
\dim R/\p_{j+1}+\height\p_{j+1}=\dim R/\p_j+\height(\p_j/\p_{j+1})+\height\p_{j+1}\le \dim R/\p_j+\height\p_j.
$$
We get $n_{j+1}:=\dim R-\dim R/\p_{j+1}-\height\p_{j+1}\ge n_j$, which means $\p_{j+1}\in V_{n_j}\subseteq V_{n_{j+1}}$.
This step can be repeated at most $\height\p_1$ times.
Therefore, there exists a prime ideal $\p$ of $R$ which is a minimal element of $V_{n}$, where $n=\dim R-\dim R/\p-\height\p$.
Let $\dual'$ be a dualizing complex of $R_\p$.
We see by Lemma \ref{dual cpx remark}(2) that $\dual_\p\simeq \Sigma^{-n}\dual'$ and thus $\RHom_R(N,\dual)_\p\simeq\Sigma^{-n}\RHom_{R_\p}(N_\p,\dual')$.
This implies that for all $i>0$, 
$$
H_i(X_\p\otimes_{R_\p}^\L\RHom_{R_\p}(N_\p,\dual'))\cong H_{i-n}(X_\p\otimes_{R_\p}^\L\RHom_R(N,\dual)_\p)\cong H_{i-n}(X\otimes_R^\L\RHom_R(N,\dual))_\p
$$
hold and these $R_\p$-modules have finite length since $\p$ is minimal in $V_n$.
By the previous paragraph, $H_{i}(X_\p\otimes_{R_\p}^\L\RHom_{R_\p}(N_\p,\dual'))=0$ for all $i>0$.
This contradicts the fact that $\p$ belongs to $V_n$.

The assertions (2), (3), and (4) are all consequences of (1).
Indeed, (2) follows from Lemma \ref{std cond lemma}.
In the case of (3), we have $\pd_{R_\p}M_\p \le \height \p$ for every prime ideal $\p$ of $R$.
In the case of (4), for any prime ideal $\p$ of $R$ such that $\height\p<s$, $\boldsymbol{x}R$ is not contained in $\p$ and thus $K_\p\simeq 0$.
\end{proof}

\begin{rem}\label{MCMcpx fpd}
Let $(R,\m,k)$ be a local ring.
A complex of $R$-modules $X$ is said to be \textit{maximal Cohen--Macaulay} if $\bigoplus_{i\in\mathbb{Z}} H_i(X)$ is finitely generated, $H_0(X)\to H_0(k\otimes_R^\L X)$ is non-zero, and $H^i_{\m}(X)=0$ for $i\ne \dim R$ (as a cochain complex).
By definition, any maximal Cohen--Macaulay module is a maximal Cohen--Macaulay complex.
If $R$ admits a dualizing complex $\dual$ and $X$ is a maximal Cohen--Macaulay complex, then $X\simeq\RHom_R(N,\dual)$, where $N=H_0(\RHom_R(X,\dual))$ by \cite[Proposition 4.3]{IMSW}.
Therefore, for $G$, $M$, and $K$ in Proposition \ref{fpd lemma}, the positive homologies of $G\otimes_R^\L X$, $M\otimes_R^\L X$, and $K\otimes_R^\L X$ all vanish.
\end{rem}

The next lemma corresponds to the Ext-orthogonality between maximal Cohen--Macaulay modules and modules of finite injective dimension; see \cite[Exercise 3,1,24]{BH} for instance.

\begin{lem}\label{bigCM lemma}
Let $R$ be a complete local ring with a dualizing complex $\dual$, and $M$ a big Cohen--Macaulay $R$-module.
Then $H_0(\RHom_R (M, \dual))\ne 0$ holds.
Let $F=(0\to F_a\to \cdots \to F_b\to 0)$ be a complex of finitely generated free $R$-modules.
Then $H_i(\RHom_R (M, F\otimes_R^\L\dual))=0$ for any $i<b$.
In particular, $H_i(\RHom_R (M, N\otimes_R^\L\dual))=0$ holds for any $i<0$ and any finitely generated $R$-module $N$ of finite projective dimension.
\end{lem}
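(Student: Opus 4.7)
The strategy is to identify $\RHom_R(M, \dual)$ in the derived category with a single module concentrated in chain degree $0$, and then to read off both claims from this identification.

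The starting point is the general local duality formula
\begin{equation*}
\RHom_R(M, \dual) \simeq \Sigma^{-d}\RHom_R(R\Gamma_\m M, E),
\end{equation*}
valid for an arbitrary $R$-module $M$. This is derived by combining the representation $R\Gamma_\m M \simeq M \otimes_R^\L R\Gamma_\m R$ (via the flat \v{C}ech complex on a system of parameters) with Grothendieck local duality applied to $R$ itself, namely $\RHom_R(R\Gamma_\m R, E) \simeq \Sigma^d \dual$, together with the tensor-Hom adjunction $\RHom_R(M \otimes_R^\L X, E) \simeq \RHom_R(M, \RHom_R(X, E))$. Since $M$ is big Cohen--Macaulay with $M$-regular system of parameters $x_1,\ldots,x_d$, a standard argument gives $\H_\m^i(M) = 0$ for $i\ne d$ and $\H_\m^d(M) = \varinjlim_n M/(x_1^n,\ldots,x_d^n)M$, with the class $[1 + (x)M]$ nonzero by the regularity of $x$ on $M$. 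Hence $R\Gamma_\m M$ is quasi-isomorphic to $\H_\m^d(M)$ placed in chain degree $-d$, and substituting into the local duality formula yields
\begin{equation*}
\RHom_R(M, \dual) \simeq \Hom_R(\H_\m^d(M), E)
\end{equation*}
concentrated in chain degree $0$. The first assertion follows, as this module is nonzero because $\H_\m^d(M)\ne 0$ and $E$ is faithful.

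For the second assertion, since $F$ is a bounded complex of finitely generated free $R$-modules, it is perfect, and the natural evaluation map provides an isomorphism $\RHom_R(M, F \otimes_R^\L \dual) \simeq F \otimes_R \RHom_R(M, \dual) \simeq F \otimes_R \Hom_R(\H_\m^d(M), E)$. The complex on the right has nonzero components only in chain degrees $[b,a]$, so its homology vanishes outside this range; in particular $H_i = 0$ for $i<b$. The ``in particular'' clause follows immediately by taking $F$ to be a finite free resolution of $N$, which sits in chain degrees $[0,\pd_R N]$ and satisfies $N \otimes_R^\L \dual \simeq F \otimes_R^\L \dual$; here $b = 0$.

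The main obstacle is to establish the local duality identity for $M$ not necessarily finitely generated, since textbook formulations of local duality typically require finite generation. The derivation above uses only the general isomorphism $R\Gamma_\m(-) \simeq (-)\otimes_R^\L R\Gamma_\m R$ and the finitely generated instance $\RHom_R(R\Gamma_\m R, E) \simeq \Sigma^d \dual$ for $R$ itself, so no additional hypothesis on $M$ is required; the shift and normalization conventions, however, must be tracked carefully throughout.
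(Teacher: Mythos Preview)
Your proof is correct and takes a genuinely different route from the paper's.

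The paper argues by passing to a Gorenstein cover: since $R$ is complete it is a quotient of a Gorenstein local ring $S$ with $\dim S=\dim R$, whence $\dual\simeq\RHom_S(R,S)$ and $\RHom_R(M,\dual)\simeq\RHom_S(M,S)$. It then cites \cite[Lemma~3.6]{DD} to obtain $\Hom_S(M,S)\ne 0$ and $\Ext_S^{>0}(M,S)=0$. For the second assertion the paper proceeds by induction on $a-b$, splitting off one free term at a time. Your approach instead identifies $\RHom_R(M,\dual)$ directly via local duality as $\Hom_R(\H_\m^d(M),E)$ concentrated in degree~$0$, and then for the second claim pulls the perfect complex $F$ through $\RHom_R(M,-)$ in one step. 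Your argument is more conceptual---it makes visible that the entire lemma rests on the single fact that a big Cohen--Macaulay module has local cohomology concentrated in top degree---and it avoids both the external citation and the induction. The paper's route, by contrast, stays closer to classical homological algebra and does not need to justify the local duality isomorphism for modules that are not finitely generated; you handle that point correctly by reducing to the identity $\RHom_R(R\Gamma_\m R,E)\simeq\Sigma^d\dual$, which only involves $R$.

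One small slip: you write ``the class $[1+(x)M]$ nonzero,'' but $M$ is a module, not an algebra, so there is no element~$1$. What you need is that some $m\in M\setminus(x)M$ has nonzero image in $\varinjlim_n M/(x^n)M$; this follows because the transition maps $M/(x^n)M\to M/(x^{n+1})M$ are injective for a regular sequence. The conclusion $\H_\m^d(M)\ne 0$ is unaffected.
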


\begin{proof}
Since $R$ is complete, $R$ is a homomorphic image of a complete Gorenstein local ring $S$ such that $\dim S=\dim R$.
Then $D\simeq\RHom_S(R,S)$.
By \cite[Lemma 3.6]{DD}, we obtain $H_0(\RHom_R(M, \dual))\cong H_0(\RHom_S(M, S))\cong\Hom_S(M, S)\ne 0$.
We use induction on $a-b$ to prove the latter part.
If $a-b=0$, then $F_b\cong R^{\oplus n}$ for some $n$.
We have $\RHom_R(M, F\otimes_R^\L\dual)\simeq\Sigma^{b}\RHom_R(M, \dual)^{\oplus n}\simeq\Sigma^{b}\RHom_S(M, S)^{\oplus n}$ and thus $H_i(\RHom_R (M, F\otimes_R^\L\dual))\cong\Ext_S^{b-i}(M,S)^{\oplus n}=0$ for any $i<b$ by \cite[Lemma 3.6]{DD}.
Suppose that $a-b>0$.
There is a short exact sequence 
\[
  \xymatrix@C=15pt@R=10pt
  {
    G := ( 
    0 \ar[r]
    & 0 \ar[r] \ar[d]
    & \cdots \ar[r]
    & 0 \ar[r] \ar[d]
    & F_b \ar[r] \ar@{=}[d]
    & 0 )\\
    F = (0 \ar[r]
    & F_a \ar[r] \ar@{=}[d]
    & \cdots \ar[r]
    & F_{b+1} \ar[r] \ar@{=}[d]
    & F_b \ar[r] \ar[d]
    & 0 )\\
    H := (0 \ar[r]
    & F_a \ar[r]
    & \cdots \ar[r]
    & F_{b+1} \ar[r]
    & 0  \ar[r] 
    & 0 )\\
  }
\]
of complexes of finitely generated free $R$-modules.
For each $i<b$, this induces an exact sequence
$$
H_i(\RHom_R (M, G\otimes_R^\L\dual))\to H_i(\RHom_R (M, F\otimes_R^\L\dual))\to H_i(\RHom_R (M, H\otimes_R^\L\dual)).
$$
By the induction hypothesis, we have $H_i(\RHom_R (M, G\otimes_R^\L\dual))=0=H_i(\RHom_R (M, H\otimes_R^\L\dual))$, which implies $H_i(\RHom_R (M, F\otimes_R^\L\dual))=0$.
\end{proof}

We are now ready to prove the main result of this section.
In general, the canonical module is neither maximal Cohen--Macaulay nor of finite injective dimension. 
However, we can refine the proof in \cite{DD} by using the fact that it is the 0-th homology of the dualizing complex.

\begin{thm}\label{nonCM}
Let $R$ be a complete local ring, $I$ a proper ideal of $R$, $M$ a big Cohen--Macaulay $R$-module, $\dual$ a dualizing complex of $R$, and $\omega$ a canonical module of $R$.
Suppose that there exist a bounded complex $F$ of finitely generated free $R$-modules such that $H_i(F\otimes_R^\L \dual)=0$ for any $i>0$, $H_0(F)=R/I$, and $F_j=0$ for all $j<0$.
Then $\tr_\omega (M)\nsubseteq I\omega$.
In particular, if $R$ is quasi-Gorenstein, then $\tr_R (M)\nsubseteq I$.
\end{thm}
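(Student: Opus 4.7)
Suppose for contradiction that $\tr_\omega(M)\subseteq I\omega$. The strategy is to force $H_0(\RHom_R(M,\dual))=0$, contradicting Lemma \ref{bigCM lemma}. The main tool is the projection formula for the bounded complex $F$ of finitely generated free modules, which yields
\[
\RHom_R(M,F\otimes_R^{\L} \dual)\simeq F\otimes_R^{\L}\RHom_R(M,\dual).
\]
By Lemma \ref{bigCM lemma} (applied with $b=0$, since $F_j=0$ for $j<0$), the left side has vanishing homology in all negative degrees, while the same lemma gives $H_0(\RHom_R(M,\dual))\neq 0$.

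Replacing $\dual$ by a quasi-isomorphic bounded complex of injectives concentrated in degrees $[0,-d]$ (so that $\omega=H_0(\dual)=\ker(\dual_0\to \dual_{-1})$), a chain-level computation gives the identification $H_0(\RHom_R(M,\dual))=\Hom_R(M,\omega)$. Using Lemma \ref{simple minimal lemma} I would replace $F$ by a quasi-isomorphic complex with $F_0=R$, so that $\image(F_1\to F_0)=I$ and the cokernel of $F_1\otimes\omega\to\omega$ is $\omega/I\omega$. The hypothesis $H_i(F\otimes^{\L}\dual)=0$ for $i>0$, combined with the hyper-Tor spectral sequence
\[
E^2_{p,q}=H_p(F\otimes H_q(\dual))\Rightarrow H_{p+q}(F\otimes^{\L}\dual),
\]
identifies $H_0(F\otimes^{\L}\dual)$ via an edge-map surjection onto $R/I\otimes_R H_0(\dual)=\omega/I\omega$.

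The hypothesis $\tr_\omega(M)\subseteq I\omega$ is equivalent to the vanishing of the natural map $\Hom_R(M,\omega)\to\Hom_R(M,\omega/I\omega)$. Threading this vanishing through the identifications above and the projection-formula equivalence, I would deduce that the edge map coming from the spectral sequence for $F\otimes^{\L}\RHom_R(M,\dual)$ kills the class corresponding to any $f\in\Hom_R(M,\omega)=H_0(\RHom_R(M,\dual))$. Combined with the vanishing of negative-degree homology supplied by Lemma \ref{bigCM lemma} and the boundedness of $F$, an iterative descent along the filtration of the double complex $F\otimes\dual$ then forces $\Hom_R(M,\omega)=0$, the desired contradiction. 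The quasi-Gorenstein assertion is immediate since $\omega\cong R$ gives $\tr_\omega(M)=\tr_R(M)$ and $I\omega=I$. The hard part will be the iterative descent: rigorously exploiting the positivity of the $F\otimes^{\L}\dual$ spectral sequence together with the negativity of the $F\otimes^{\L}\RHom(M,\dual)$ spectral sequence to transfer the ideal-theoretic hypothesis into a derived-categorical vanishing, especially because the module $\Hom_R(M,\omega)$ is not finitely generated in general and a naive Nakayama argument is unavailable.
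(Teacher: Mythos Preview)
Your projection-formula strategy is a legitimate variant of the paper's argument: both reduce to establishing $\Hom_R(M,I\omega)=I\,\Hom_R(M,\omega)$, which under the hypothesis $\tr_\omega(M)\subseteq I\omega$ yields $N=IN$ for $N:=\Hom_R(M,\omega)$. The paper does this by an explicit truncation: it writes $F$ as the cone of a map $G\to F_0$, splits $G$ further into $F_1$ and a shifted tail $H$, and uses Lemma~\ref{bigCM lemma} to kill $H_0(\RHom_R(M,H\otimes^{\L}\dual))$, thereby showing the sequence $\alpha$ is exact. Your route would instead compute $H_0(F\otimes^{\L}\RHom_R(M,\dual))$ two ways, but beware that the edge map runs $\omega/I\omega\to H_0(F\otimes^{\L}\dual)$, not the surjection you claim; it is this direction that lets the isomorphism $N/IN\cong\Hom_R(M,H_0(F\otimes^{\L}\dual))$ factor through $\Hom_R(M,\omega/I\omega)$ and hence forces $IN=\Hom_R(M,I\omega)$.

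The genuine gap is your last step. You declare that ``a naive Nakayama argument is unavailable'' because $N$ is not finitely generated, and propose an unspecified ``iterative descent along the filtration'' as the hard part. No such descent is needed, and the paper does not attempt one: it simply invokes \cite[Lemma~2.8]{DD}. The point is that $N=IN$ gives $N=I^nN$ for all $n$, so every $f\in N$ has $f(M)\subseteq I^n\omega$; since $\omega$ is \emph{finitely generated}, Krull's intersection theorem gives $\bigcap_n I^n\omega=0$, whence $f=0$ and $N=0$, contradicting Lemma~\ref{bigCM lemma}. The finiteness you need is that of $\omega$, not of $N$. With this one-line argument in place of your ``hard part'', your outline becomes a repackaging of the paper's proof.
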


\begin{proof}
By Lemma \ref{simple minimal lemma}, we may assume $F_0=R$.
There is an exact sequence $F_1\xrightarrow{f} F_0\xrightarrow{\pi} R/I\to 0$ such that $\image f=I$.
Following the proof of \cite[Theorem 3.1]{DD}, we only need to prove that the sequence 
$$
\alpha: \Hom_R(M,F_1\otimes_R\omega)\xrightarrow{\Hom_R(M,f\otimes_R\omega)} \Hom_R(M,F_0\otimes_R\omega) \xrightarrow{\Hom_R(M,\pi\otimes_R\omega)} \Hom_R(M,R/I\otimes_R\omega)
$$ 
is exact and that $\Hom_R(M,\omega)$ is non-zero.
In fact, assume that $\alpha$ is exact. Then we obtain
$$
I \Hom_R(M,\omega)=\image (\Hom_R(M,f\otimes_R\omega))=\ker (\Hom_R(M,\pi\otimes_R\omega))=\Hom_R(M,I\omega).
$$
If $\tr_\omega (M)\subseteq I\omega$, then the equalities $\Hom_R(M,\omega)=\Hom_R(M,\tr_\omega (M))=\Hom_R(M,I\omega)$ hold, which implies $\Hom_R(M,\omega)=0$ by \cite[Lemma 2.8]{DD}.
As $R$ is complete, we get $\omega\cong H_0(D)$, and hence $\Hom_R(M,\omega)\cong\ker(\Hom_R(M,D_0)\to\Hom_R(M,D_{-1}))=H_0(\RHom_R (M, \dual))\ne 0$ by Lemma \ref{bigCM lemma}.
Hereafter, we show that $\alpha$ is exact.

The complex $F$ is the mapping cone of the following homomorphism of complexes:
\[
  \xymatrix@C=20pt@R=10pt
  {
    G := ( 
    0 \ar[r]
    & F_n \ar[r] \ar[d]
    & \cdots \ar[r]
    & F_2 \ar[r] \ar[d]
    & F_1 \ar[r] \ar[d]^{f}
    & 0 )
\\
    F_0 = (0 \ar[r]
    & 0 \ar[r] 
    & \cdots \ar[r]
    & 0 \ar[r]
    & F_0 \ar[r]
    & 0 ).
\\
  }
\]
There is a short exact sequence
\[
  \xymatrix@C=20pt@R=10pt
  {
    F_1 = ( 
    0 \ar[r]
    & 0 \ar[r] \ar[d]
    & \cdots \ar[r]
    & 0 \ar[r] \ar[d]
    & F_1 \ar[r] \ar@{=}[d]
    & 0 )
\\ G = (0 \ar[r]
    & F_n \ar[r] \ar@{=}[d]
    & \cdots \ar[r]
    & F_2 \ar[r] \ar@{=}[d]
    & F_1  \ar[r] \ar[d]
    & 0 )
\\ H := (0 \ar[r]
    & F_n \ar[r]
    & \cdots \ar[r]
    & F_2 \ar[r]
    & 0  \ar[r] 
    & 0 )
\\
  }
\]
of complexes of finitely generated free $R$-modules.
By assumption, $H_i(F\otimes_R^\L\dual)=0=H_i(F_0\otimes_R^\L\dual)$ for any $i>0$.
The exact triangle $G\to F_0\to F\to \Sigma^1 G$ in the derived category induces an exact sequence
$$
0\to H_0(G\otimes_R^\L\dual)\to H_0(F_0\otimes_R^\L\dual)\to H_0(F\otimes_R^\L\dual)
$$ and equalities $H_i(G\otimes_R^\L\dual)=0$ for all $i>0$.
We see that
$$
0\to \Hom_R(M, H_0(G\otimes_R^\L\dual))\xrightarrow{\psi} \Hom_R(M, H_0(F_0\otimes_R^\L\dual))\to \Hom_R(M, H_0(F\otimes_R^\L\dual))
$$ 
is exact and we have a natural isomorphism $\Hom_R(M, H_0(G\otimes_R^\L\dual))\cong H_0(\RHom_R(M, G\otimes_R^\L\dual))$.
On the other hand, $H_i(F_1\otimes_R^\L\dual)=0$ holds for any $i>0$ and thus there is a natural isomorphism
$\Hom_R(M, H_0(F_1\otimes_R^\L\dual))\cong H_0(\RHom_R(M, F_1\otimes_R^\L\dual))$.
We obtain a commutative diagram
\[
  \xymatrix@C=20pt@R=15pt{
    \Hom_R(M, H_0(F_1\otimes_R^\L\dual)) \ar[r]^{\varphi} \ar[d]_{\rotatebox{90}{$\cong$}} 
    & \Hom_R(M, H_0(G\otimes_R^\L\dual)) \ar[d]_{\rotatebox{90}{$\cong$}} 
    & \\
    H_0(\RHom_R(M, F_1\otimes_R^\L\dual)) \ar[r] 
    & H_0(\RHom_R(M, G\otimes_R^\L\dual))  \ar[r] 
    & H_0(\RHom_R(M, H\otimes_R^\L\dual)) \\
  }
\]
with exact rows.
By Lemma \ref{bigCM lemma}, $H_0(\RHom_R(M, H\otimes_R^\L\dual))=0$.
So $\varphi$ is surjective.
We see that 
$$
\beta: \Hom_R(M, H_0(F_1\otimes_R^\L\dual))\xrightarrow{\psi\circ\varphi} \Hom_R(M, H_0(F_0\otimes_R^\L\dual))\to \Hom_R(M, H_0(F\otimes_R^\L\dual))
$$ 
is exact and that $\psi\circ\varphi=\Hom_R(M, H_0(f\otimes_R^\L\dual))$.
Combining the map $H_0(F_1\otimes_R^\L\dual)\to H_0(G\otimes_R^\L\dual)$ and the complex $H_0(G\otimes_R^\L\dual)\to H_0(F_0\otimes_R^\L\dual)\to H_0(F\otimes_R^\L\dual)$, we have a complex 
$$
H_0(F_1\otimes_R^\L\dual)\xrightarrow{H_0(f\otimes_R^\L\dual)} H_0(F_0\otimes_R^\L\dual)\to H_0(F\otimes_R^\L\dual).
$$
Since $\omega\cong H_0(D)$ and the $R$-modules $F_1$ and $F_0$ are free, we get a commutative diagram
\[
  \xymatrix@C=50pt@R=15pt{
    F_1 \otimes_R \omega \ar[r]^{f\otimes_R\omega} \ar[d]_{\rotatebox{90}{$\cong$}} 
    & F_0 \otimes_R \omega \ar[r]^{\pi\otimes_R\omega} \ar[d]_{\rotatebox{90}{$\cong$}} 
    & R/I \otimes_R \omega \ar[r] \ar[d] 
    & 0 \\
    H_0(F_1\otimes_R^\L\dual) \ar[r]^{H_0(f\otimes_R^\L\dual)}
    & H_0(F_0 \otimes_R^\L\dual) \ar[r] 
    & H_0(F \otimes_R^\L\dual) \\
  }
\]
by the universal property of the cokernel of $f\otimes_R\omega$.
This yields a commutative diagram 
\[
  \xymatrix@C=20pt@R=15pt{
    \alpha: \Hom_R(M,F_1 \otimes_R \omega) \ar[r] \ar[d]_{\rotatebox{90}{$\cong$}} 
    & \Hom_R(M,F_0 \otimes_R \omega) \ar[r] \ar[d]_{\rotatebox{90}{$\cong$}} 
    & \Hom_R(M,R/I \otimes_R \omega) \ar[d] \\
    \beta: \Hom_R(M, H_0(F_1\otimes_R^\L\dual)) \ar[r] 
    & \Hom_R(M, H_0(F_0\otimes_R^\L\dual)) \ar[r] 
    & \Hom_R(M, H_0(F\otimes_R^\L\dual)) \\
  }
\]
of complexes.
Since $\beta$ is exact, so is $\alpha$.
\end{proof}

Below is a direct corollary of Theorem \ref{nonCM} and is a non-Cohen--Macaulay version of \cite[Theorem 1.13]{DD}.
The result on the trace ideal improves \cite[Theorem A(i)]{A} when the ring is a complete local domain.
In particular, by \cite[Theorem 3]{I}, it refines the monomial conjecture.

\begin{cor}\label{nonCM original}
Let $R$ be a complete local ring, $I$ a proper ideal of $R$, $M$ a big Cohen--Macaulay $R$-module, and $\omega$ a canonical module of $R$.
Then $\tr_\omega (M)\nsubseteq I\omega$ provided one of the following holds:
\begin{enumerate}[\rm(1)]
\item There is $G$ satisfying the standard conditions on rank and height such that $H_0(G)=R/I$;
\item $I$ has finite projective dimension;
\item $I$ is a parameter ideal.
\end{enumerate}
Additionally, assume that $R$ is quasi-Gorenstein.
Then the trace ideal $\tr_R (M)$ is not contained in $I$.
In particular, the conductor of $R$ is not contained in $I$ if $R$ is an integral domain.
\end{cor}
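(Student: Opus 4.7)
The plan is to reduce all three cases to Theorem \ref{nonCM}, whose hypothesis requires producing a bounded complex $F$ of finitely generated free $R$-modules, concentrated in non-negative degrees, with $H_0(F)=R/I$ and $H_i(F\otimes_R^{\L}\dual)=0$ for every $i>0$. Once this is done for each of (1)--(3), the conclusion $\tr_\omega(M)\nsubseteq I\omega$ follows immediately from Theorem \ref{nonCM}.

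For case (1), simply take $F=G$; the required vanishing $H_i(G\otimes_R^{\L}\dual)=0$ for $i>0$ is exactly Proposition \ref{fpd lemma}(2), and by convention $G$ is already concentrated in non-negative degrees. For case (2), let $F$ be any finite free resolution of $R/I$, which is bounded, sits in non-negative degrees, and satisfies $F\simeq R/I$; Proposition \ref{fpd lemma}(3) applied to $R/I$ gives $H_i(F\otimes_R^{\L}\dual)=H_i(R/I\otimes_R^{\L}\dual)=0$ for $i>0$. For case (3), write $I=(x_1,\ldots,x_s)$ with $s=\height I$ and let $F=K$ be the Koszul complex on $x_1,\ldots,x_s$; then $H_0(K)=R/I$, $K$ is concentrated in degrees $[s,0]$, and Proposition \ref{fpd lemma}(4) yields $H_i(K\otimes_R^{\L}\dual)=0$ for $i>0$. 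In each case Theorem \ref{nonCM} applies and produces $\tr_\omega(M)\nsubseteq I\omega$.

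For the addendum, suppose $R$ is quasi-Gorenstein. Then $\omega\cong R$ (as recorded in Convention \ref{def}(5) for complete quasi-Gorenstein rings), so the conclusion $\tr_\omega(M)\nsubseteq I\omega$ specializes to $\tr_R(M)\nsubseteq I$. Finally, if $R$ is additionally a complete local domain, apply the preceding to a big Cohen--Macaulay $R^{+}$-algebra $M$ (which exists by Andr\'e's theorem); the introduction records that the conductor of $R$ contains $\tr_R(M)$ for any such $M$, so $\tr_R(M)\nsubseteq I$ forces the conductor itself not to lie in $I$. There is no real obstacle here: the theorem has already been proved in the previous subsection, and this corollary is essentially a matter of checking that the standard-condition hypothesis of Theorem \ref{nonCM} is met by the three standard sources of free complexes (arbitrary complexes satisfying the standard conditions, free resolutions of finite projective dimension modules, and Koszul complexes on height-attaining sequences).
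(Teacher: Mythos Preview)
Your approach is correct and essentially the same as the paper's: both reduce (1)--(3) to Theorem \ref{nonCM} via Proposition \ref{fpd lemma}, and both handle the quasi-Gorenstein addendum by identifying $\omega\cong R$. The only point worth flagging is the final conductor statement: you cite the introduction's assertion that the conductor contains $\tr_R(B)$ for any big Cohen--Macaulay $R^{+}$-algebra $B$, but that sentence in the introduction is describing what \emph{this very corollary} establishes, so invoking it is mildly circular. The paper instead gives a direct two-line argument here: for $z\in\tr_R(B)$ and $x/y\in\overline{R}$, one has $x\in yR^{+}\cap R\subseteq yB\cap R$ (since $R^{+}$ is integrally closed), and then \cite[Theorem 3.12]{PR} yields $zx\in yR$, so $z$ lies in the conductor. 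Filling in that step would make your proof self-contained.
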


\begin{proof}
All statements except the last one are consequences of Lemma \ref{fpd lemma} and Theorem \ref{nonCM}.
Suppose that $R$ is a complete quasi-Gorenstein local domain.
Let $R^{+}$ be the absolute integral closure, that is, the integral closure of $R$ inside an algebraic closure of its fraction field.
There is an $R^{+}$-algebra $B$ such that it is a big Cohen--Macaulay $R$-module; see \cite{And, And2, B, HH4, HH5, Mu, Sh} for instance. % \cite[Remark 2.7{Mu}を見れば、（少なくともcomp domainの）各標数のケースが書いてある
It is enough to show that $\tr_R (B)$ is contained in the conductor $\mathfrak{C}$ of $R$.
Let $z\in \tr_R (B)$ and $x/y\in \overline{R}$.
Since $R^{+}$ is integrally closed, so is $yR^{+}$.
The integral closure of $yR$ is equal to $yR^{+}\cap R$ by \cite[Proposition 1.6.1]{HS}.
We have $x\in yR^{+}\cap R\subseteq yB\cap R$.
By \cite[Theorem 3.12]{PR}, we get $zx\in yR$.
We conclude that $z\in (R:_K \overline{R})=\mathfrak{C}$.
\end{proof}

From \cite[Theorem 1.13]{DD}, \cite[Proposition 6.1]{S2} was recovered in the case of complete Gorenstein domains.
Similarly, the following corollary provides a new answer to Question \ref{ques 2}.

\begin{cor}\label{nonCM cor}
Let $R$ be a complete local domain of prime characteristic, $I$ a proper ideal of $R$, and $\omega$ a canonical module of $R$.
Then $\tau (\omega)\nsubseteq I\omega$ provided one of the following holds:
\begin{enumerate}[\rm(1)]
\item $R/I$ has finite phantom projective dimension;
\item $I$ has finite projective dimension;
\item $I$ is a parameter ideal.
\end{enumerate}
In particular, if $R$ is quasi-Gorenstein, the parameter test ideal of $R$ is not contained in $I$.
\end{cor}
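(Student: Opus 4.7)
The plan is to reduce Corollary~\ref{nonCM cor} to Corollary~\ref{nonCM original} by showing $\tr_\omega(B) \subseteq \tau(\omega)$ for a big Cohen--Macaulay $R^+$-algebra $B$, whose existence in characteristic $p$ is recalled in the proof of Corollary~\ref{nonCM original}.

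First, the natural map $\H_\m^d(R) \otimes_R B \to \H_\m^d(B)$ is an isomorphism, since top local cohomology is a cokernel in the \v{C}ech complex and tensoring preserves cokernels. Next, by Smith's identification of the tight closure of zero in the top local cohomology of a complete local domain of characteristic $p$, the kernel of $\H_\m^d(R) \to \H_\m^d(R^+)$ equals $0^*_{\H_\m^d(R)}$; since this map factors through $\H_\m^d(B)$ (as $B$ is an $R^+$-algebra), every element of $0^*_{\H_\m^d(R)}$ maps to zero in $\H_\m^d(B)$ as well.

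Combining these two facts yields $\tr_\omega(B) \subseteq \tau(\omega)$. Indeed, using $\omega \cong \Hom_R(\H_\m^d(R), E)$, any $R$-linear map $f \colon B \to \omega$ corresponds by Hom--tensor adjunction to an element $\tilde{f} \in \Hom_R(B \otimes_R \H_\m^d(R), E) = \Hom_R(\H_\m^d(B), E)$, with $f(b)(\eta) = \tilde{f}(b \otimes \eta)$ for $b \in B$ and $\eta \in \H_\m^d(R)$. When $\eta \in 0^*_{\H_\m^d(R)}$, the previous paragraph shows that $b \otimes \eta = b \cdot (1 \otimes \eta)$ vanishes in $\H_\m^d(R) \otimes_R B \cong \H_\m^d(B)$, so $f(b)(\eta) = 0$, i.e.\ $f(b) \in \tau(\omega)$.

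Now Corollary~\ref{nonCM original} applied with $M = B$ gives $\tr_\omega(B) \nsubseteq I\omega$ under each of the three hypotheses, and therefore $\tau(\omega) \nsubseteq I\omega$. (In case (1), since $R$ is a domain and hence reduced, the stably phantom acyclic complex resolving $R/I$ satisfies the standard conditions on rank and height by Convention~\ref{def}(4), so condition (1) of Corollary~\ref{nonCM original} is met.) For the ``in particular'' statement, when $R$ is quasi-Gorenstein we have $\omega \cong R$ and $\H_\m^d(R) \cong E$, and under Matlis duality the submodule $\tau(\omega) \subseteq \omega$ corresponds to $\Ann_R(0^*_{\H_\m^d(R)}) \subseteq R$, which equals the parameter test ideal by \cite[Proposition~4.4]{S2} (see Remark~\ref{others remark}(4)). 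The main technical obstacle is the inclusion $\tr_\omega(B) \subseteq \tau(\omega)$; once this is in place, everything else is a formal consequence of Corollary~\ref{nonCM original}.
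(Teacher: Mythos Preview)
Your proof is correct and takes essentially the same approach as the paper: both reduce to Corollary~\ref{nonCM original} by relating $\tau(\omega)$ to a trace via Matlis duality and Smith's description of $0^*_{\H^d_\m(R)}$ (the paper uses $R^{+}$ itself---which is already big Cohen--Macaulay in characteristic $p$ by \cite{HH4}---and obtains the equality $\tau(\omega)=\tr_\omega(R^{+})$, whereas you prove only the needed inclusion $\tr_\omega(B)\subseteq\tau(\omega)$). One minor slip: the factoring goes the other way---the map $\H^d_\m(R)\to\H^d_\m(B)$ factors through $\H^d_\m(R^{+})$ since $R\to R^{+}\to B$, not vice versa---but your conclusion that $0^*$ dies in $\H^d_\m(B)$ is correct for exactly that reason.
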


\begin{proof}
Let $R^{+}$ be the absolute integral closure, that is, the integral closure of $R$ inside an algebraic closure of its fraction field.
Put $d=\dim R$.
It follows from \cite[Proposition 3.3(iii) and Theorem 5.1]{S1} that $0^\ast_{\H^d_\m(R)}=\ker (\H^d_\m(R)\to \H^d_\m(R)\otimes_R R^{+}:\eta\mapsto \eta\otimes 1)$.
We have an exact sequence
$$
\Hom_R(R^{+}, \Hom_R(\H^d_\m(R), E)) \xrightarrow{\phi} \Hom_R(\H^d_\m(R), E)\xrightarrow{\psi} \Hom_R(0^\ast_{\H^d_\m(R)}, E)\to 0, 
$$
where $E$ is the injective hull of the residue field of $R$ and for any $f\in\Hom_R(R^{+}, \Hom_R(\H^d_\m(R), E))$, $\phi(f)=f(1)$.
By definition, the equalities $\tau (\omega)=\ker\psi=\image\phi=\tr_\omega (R^{+})$ hold.
%最後の\supseteqは、f:R^{+}\to \omegaとa\in R^{+}に対するf(a)は、g:R^{+}\to R^{+}:1\to aによって\phi(fg)=fg(1)=f(a)とあらわせるため
We see by \cite[Theorem 9.8]{HH2} that if $R/I$ has finite phantom projective dimension, then there is $G$ satisfying the standard conditions on rank and height such that $H_0(G)=R/I$.
It follows from \cite[Theorem 1.1]{HH4} and Corollary \ref{nonCM original} that $\tau (\omega)=\tr_\omega (R^{+})\nsubseteq I\omega$.
Suppose that $R$ is quasi-Gorenstein.
Since $R$ is complete, $\Hom_R(E/0_{E}^\ast, E)=\Ann_R (0^\ast_{E})$ holds.
By $\H^d_\m(R)= E$ and $\omega=R$, $\Ann_R (0^\ast_{E})$ is equal to $\tau (R)$ that is not contained in $I$, which implies that the parameter test ideal is not contained in $I$.
%\cite[Proposition 3.9]{PR}より$\Ann_R (0^\ast_{E})$はtest idealと一致。para test はそれを含むのでOK!
\end{proof}

%%%%%%%%%%%%%%%%%%%%%%%%%%%%%%%%%%%%%%%%%%%%%%
\begin{ac}
The author would like to thank his host researcher Linquan Ma for valuable comments and helpful suggestions.
As stated in the main text, Ma proposed including Examples \ref{count. para. F-ideal CM} and \ref{count. para. s.o.p} in this paper, for which the author also wishes to reiterate his sincere gratitude.
The author is also grateful to Alberto Fernandez Boix and Naoyuki Matsuoka for their constructive advice.
The author was supported by JSPS Overseas Challenge Program for Young Researchers, and was partly supported by Grant-in-Aid for JSPS Fellows Grant Number 23KJ1117.
\end{ac}
%%%%%%%%%%%%%%%%%%%%%%%%%%%%%%%%%%%%%%%%%%%%%%%%%%%%%%%%%%%%%

\end{document}